\newtheorem{theorem}{Theorem}[section]
\newtheorem{Proposition}[theorem]{Proposition}
\newtheorem{Lemma}[theorem]{Lemma}
\newtheorem{proof}{\textmd{\textit{Proof.}}}
\newtheorem{Remark}[theorem]{Remark}
\newtheorem{Example}[theorem]{Example}
\newtheorem{Definition}[theorem]{Definition}
\newcommand{\qedd}{\hfill \Box}
\newcommand{\R}{\ensuremath{\mathbb{R}}}
\newcommand{\Sph}{\ensuremath{\mathbb{S}}}
\newcommand{\cK}{\ensuremath{\mathcal{K}}}
\newcommand{\cL}{\ensuremath{\mathcal{L}}}
\newcommand{\cP}{\ensuremath{\mathcal{P}}}
\title{Randers metrics on two-spheres of revolution with simple cut locus \footnote{Mathematics Subject Classification: 53C22, 53C60}
\footnote{
Keywords: Riemannian manifolds; geodesics; conjugate locus; two-sphere of revolution; Randers
metric; Finslers metric; cut locus
}}
\author{Rattanasak Hama and Sorin V. Sabau}
\date{\today}
\begin{document}

\maketitle

\begin{abstract}
In the present paper, we study the Randers metric on two-spheres of revolution in order to obtain new families of Finsler of Randers type metrics with simple cut locus. We determine the geodesics behavior, conjugate and cut loci of some families of Finsler metrics of Randers type whose navigation data is not a Killing field and without sectional or flag curvature restrictions. Several examples of Randers metrics whose cut locus is simple are shown.
\end{abstract}

\section{Introduction}

A two-sphere of revolution is a compact Riemannian surface $(M,h)$, which is homeomorphic to the sphere $\Sph^2\subset \R^3$. If this manifold is endowed with a Randers metric $F=\alpha+\beta$, or more generally, with an arbitrary positive defined Finsler metric $F$, then $(M,F)$ is called a Randers or Finsler two-sphere of revolution, respectively.

One of the major problems in Differential Geometry (see \cite{SST}, \cite{ST}) and Optimal Control (see \cite{BCST}) is the study of geodesics, conjugate points and cut points of Riemannian or Finsler manifolds. We recall that a vector field $J$ along a unit speed geodesic $\gamma:[0,a]\to M$ is said to be a Jacobi field if it satisfies the well-known Jacobi equation (see for instance \cite{BCS}, Chapter 7 for details). A point $p$ is said to be conjugate to $q:=\gamma(0)$ along $\gamma$ if there exists a non-zero Jacobi field $J$ along $\gamma$ which vanishes at $p$ and $q$. The set of conjugate points of $q$ along all curves $\gamma$ starting at $q$ is called the conjugate locus of $q$.

If $\gamma:[0,l]\to M$ is a minimal geodesic on a such manifold, then its end point $\gamma(l)\in M$ is called the cut point of the initial point $q=\gamma(0)\in M$, in the sense that any extension of $\gamma$ beyond $\gamma(l)$ is not a minimizing geodesic from $q$ anymore. The cut locus $Cut(q)$ is defined as the set of cut points of $q$, and on Riemannian or Finslerian surfaces, it has  the structure of a local tree. Moreover, the cut points $p\in Cut(q)$ of $q$ are characterized by the property that the distance $d(q,\cdot)$ from $q$ is not smooth any more at $p$ (see \cite{ST2016} for details). The cut points $p$ along a geodesic $\gamma$ emanating from the point $q=\gamma(0)$ can appear either before or at the first conjugate point of $q$ along $\gamma$, but not after that (see \cite{BCS}). 

To determine the precise structure of the cut locus on a Riemannian or Finsler manifold is not an easy task. The majority of known results concern Riemannian or Randers surfaces of revolution (see \cite{ST}, \cite{TASY} for the Riemannian, and \cite{HKS}, \cite{HS} for the Randers case).   

A Randers metric $F=\alpha+\beta$ is a special Finsler metric obtained by the deformation of a Riemannian metric $\alpha$ by a one-form $\beta$ whose Riemannian $\alpha$-length is less than one in order to assure that $F$ is positively defined (\cite{Ra}). 
These Finsler metrics are intuitive generalizations of the Riemannian ones having most of the geometrical objects relatively easy to compute (see \cite{BCS}). 

An equivalent characterization of Randers metrics is through the Zermelo's navigation problem. We recall that a Finsler metric $F$ is characterized by its indicatrix $\{(x,y)\in TM: F(x,y)=1\}$ (see~\cite{BCS}). In particular, a Randers metric indicatrix is obtained as the rigid translation of the unit sphere $\{y\in T_xM: h(x,y)=1\}$ of a Riemannian metric $(M,h)$ by a vector field $W\in \mathcal{X}(M)$ whose Riemannian length is less than one. The pair $(h,W)$ will be called the navigation data of the Randers metric $F=\alpha+\beta$. Conversely, the Randers metric $F=\alpha+\beta$ will be called the solution of Zermelo's navigation problem $(h,W)$. In the case when $W$ is an $h$-Killing field, provided $h$ is not flat, the geodesics, conjugate points and cut points  of the Randers metric $F=\alpha+\beta$ can be obtained by the translation of the corresponding geodesics, conjugate points and cut points,  of the Riemannian metric $h$ by the flow of $W$, respectively (see \cite{HS}, \cite{R}). More generally, new Finsler metrics $F$ can be obtained by the rigid translation of the indicatrix of a given Finsler metric $F_0$ by a vector field $W$, such that $F_0(-W)<1$ (see \cite{FM}, \cite{S}). In this case, the pair $(F_0,W)$ will be called the general navigation data of $F$. 

Another case when the Randers geodesics can be easily related to the Riemannian ones is when the deformation one-form $\beta$ is closed. Indeed, the Randers metric $F=\alpha+\beta$ is projectively equivalent to the underlying Riemannian metric $\alpha$ if and only if $d\beta=0$. In this case, the $\alpha$-geodesics, conjugate points and cut points  coincide with the $F$-geodesics, cut points and conjugate points, respectively (see \cite{BCS}).

We combine these two cases of Randers metrics in order to obtain new families of Finsler of Randers type with simple cut locus (see Section \ref{sec_two_sphere} for the definition). The originality of our paper lies in the followings:
\begin{enumerate}[(i)]
\item We determine the geodesics behavior, conjugate and cut loci of some families of Finsler metrics of Randers type whose navigation data do not necessarily include a Killing field. 
\item We show that the structure of the cut locus of these families can be determined without any sectional or flag curvature restrictions. These are generalizations of the results in \cite{TASY} to the Randers case.
\item We construct a sequence of Randers metrics whose cut locus structure is simple.
\item We extend some classical results from the case of Randers metrics to $\beta$-changes of Finsler metrics and give new proofs to some known results. 
\end{enumerate}

If we start with a Riemannian two-sphere of revolution $(M\simeq \Sph^2,h)$ and the vector fields $V_0,V,W\in \mathcal{X}(M)$. Then the following construction gives the Randers metric $F_0,~F_1$ and $F_2$ as solutions of the Zermelo's navigation problem with data $(h,V_0), (F_0,V)$ and $(F_1,W)$, respectively,
\begin{figure}[H]
\begin{center}
\setlength{\unitlength}{1cm}
\begin{picture}(20,1)

\put(1.2,0.1){\vector(1,0){1.8}}
\put(6.3,0.1){\vector(1,0){1.8}}
\put(11.4,0.1){\vector(1,0){1.8}}

\put(0,0){$(M,h)$}
\put(3,0){$(M,F_0=\alpha_0+\beta_0)$}
\put(8.1,0){$(M,F_1=\alpha_1+\beta_1)$}
\put(13.2,0){$(M,F_2=\alpha_2+\beta_2),$}

\put(1.5,0.4){$\quad V_0$}
\put(6.6,0.4){$\quad V$}
\put(11.7,0.4){$\quad W$}

\end{picture}
\end{center}
\end{figure}

\noindent
which are positively defined provided $\|V_0\|_h<1$, $F_0(-V)<1$, and $F_1(-W)<1$, respectively. If we impose conditions that $V_0$ and $V$ to be $h$- and $F_0$-Killing fields, respectively, and $d\beta_2=0$, then the geodesics, conjugate and cut loci of $F_2$ can be determined. 

Remarkably, a shortcut of this construction would be to simply impose $\|V_0+V+W\|_h<1$, that guarantees $F_2$ is positively defined, and $V_0+V+W$ to be $h$-Killing. In this case, $F_2$ is also with simple cut locus having the same structure with the cut locus of $h$. Obviously, the cut loci of these metrics are slightly different as set of points on $M$.

This construction can be extended to a sequence of Randers metrics $\{F_i=\alpha_i+\beta_i\}_{i=1\ldots,n}$ whose cut loci are simple (see Remark \ref{rem: sequence}). 

Here is the structure of our paper. 

In Section \ref{sec_two_sphere}, we review the geometry of Riemannian two-spheres of revolution from \cite{ST} and \cite{TASY}.

In Section \ref{sec: three Randers sphere}, we describe the geometry of some families of Randers metrics obtained as generalizations to the Finslerian case of the Riemannian metrics in \cite{TASY}. We use the Hamiltonian formalism for giving and proving some basic results to be used later in the section. Lemma \ref{lem_A1} is an important result that generalizes a well-known result (\cite{MHSS}) for Randers metrics to more general Finsler metrics obtained by $\beta$-changes. The relation with $F$-Killing fields are given in Lemma \ref{lem_A2} and the basic properties of  our family of Randers metrics are in Lemma \ref{lem_new_1}.  Some of these results are indirectly suggested in \cite{FM}, but here we clarify all the basic aspects and prove them in our specific formalism. Lemma \ref{lem_new_2} gives the concrete expressions of $\widetilde{\alpha}$ and $\widetilde{\beta}$ in the families of our Randers metric, formulas that provide a better understanding of the positive definiteness of these metrics. 
 
Lemma \ref{lem: X} gives the behavior of geodesics, conjugate and cut points of the $\beta$-change of a Randers metric generalizing the results in \cite{R}. Lemma \ref{lem: X2} gives the conditions for the one-form $\beta$ to be closed in terms of the navigation data. Finally, we sum up the results in all these lemmas in Theorem \ref{thm_main_1}, which is the main result of the present paper. In Remark \ref{rem: sequence}, we show how an infinite sequence of such Randers metrics can be obtained. 
 
In Section \ref{sec: four Examples}, we construct one example of the Randers metric on the two-sphere of revolution that satisfies the conditions in Theorem \ref{thm_main_1}.

\section{Two-spheres of revolution}\label{sec_two_sphere}

Classically, surfaces of revolution are obtained by rotating a curve $(c)$ in the $xz$ plane around the $z$ axis. More precisely, if the profile curve $(c)$ is given parametrically
\begin{equation}\label{eq_parametric}
(c):\begin{cases}
x=\varphi(u)\\
z=\psi(u)
\end{cases},\
\varphi>0,\
u\in I\subset \R,
\end{equation}
then, in the case $(\varphi'(u))^2+(\psi'(u))^2\neq 0$, for all $u\in I$, the curve can be written explicitly $x=f(z)$ or implicitly by $\Phi(x,z)=0$, where $'$ is the derivative with respect to $u$.

In the case of the parametric representation \eqref{eq_parametric} one obtains an $\R^3$-immersed surface of revolution $\psi:\Omega\subset \R^2\to\R^3$, given by
\begin{equation}\label{eq_immersed}
\psi(u,v)=\left(\varphi(u)\cos v,\varphi(u)\sin v,\psi(u)\right),\ u\in I,\ v\in [0,2\pi).
\end{equation}

\begin{Remark}
The immersed surface of revolution \eqref{eq_immersed} is called of {\it elliptic type}, while
$$
\psi(u,v)=\left(\varphi(u)\cosh v,\varphi(u)\sinh v,\psi(u)\right)
$$
is called a {\it hyperbolic type}. Since we are interested in compact surfaces of revolution, only the elliptic case will be considered hereafter, leaving the hyperbolic type for a future research.
\end{Remark}
Even though the representation \eqref{eq_immersed} is quite intuitive, it has two major disadvantages:
\begin{enumerate}
\item[(1)] it leads to quite complicated formulas for the induced Riemannian metric, geodesic equations, Gauss curvature, etc.,
\item[(2)] it excludes the case of abstract  surfaces  of revolution which cannot be embedded in $\R^3$.
\end{enumerate}

The first disadvantage can be easily fixed by taking the curve $(c)$ to be unit speed parameterized in the Euclidean plane $xz$, i.e.,
$$
[\varphi'(u)]^2+[\psi'(u)]^2=1,
$$
which leads to the warped Riemannian metric
$$
ds^2=du^2+\varphi^2(u)dv^2.
$$

This simplification suggests the following definition (which also fixes the second disadvantage).
\begin{Definition}(\cite{ST})
Let $(M,h)$ be a compact Riemannian surface homeomorphic to $\Sph^2$. If $M$ admits a {\it pole} $p\in M$, and for any two points $q_1,q_2\in M$, such that $d_h(p,q_1)=d_h(p,q_2)$, there exists an Riemannian isometry $i:M\to M$ for which
$$
i(q_1)=q_2,\ i(p)=p,
$$
then $(M,h)$ is called a {\it two-sphere of revolution}. Here $d_h$ is the distance function associated to the Riemannian metric $h$.
\end{Definition}

\begin{Remark}
One example of compact surface of revolution that cannot be embedded in $\R^3$ is the real projective space $\R P^2$. It is compact being homeomorphic to $\Sph^2/_\sim$, where $\Sph^2$ is the unit sphere in $\R^3$ and $\sim$ is the equivalence relation $x\sim -x$, for all $x\in\Sph^2$. It is a surface of revolution because it can be obtained by rotating the M\"obius strip along its center line.

Finally, it cannot be embedded in $\R^3$ because it is non-orientable. More generally, it is known that any embedding of a non-orientable surface in $\R^3$ must create self-intersections and this is not allowed. Nevertheless, $\R P^2$ can be immersed in $\R^3$, and therefore can be locally embedded in $\R^3$, but not globally (see \cite{B} for properties of projective spaces).

{\spaceskip=0.15em\relax Another example is the so-called Lorentz surface, obtained by rotating the hyperbola $x^2-y^2=1$ around the $x$-axis. This surface is orientable but cannot embedded in $\R^3$ because it has a self-intersection at origin.}
\end{Remark}

This definition allows to introduce the geodesic polar coordinates $(r,\theta)\in(0,2a)\times[0,2\pi)$ around $p$, such that the Riemannian metric is given as
$$
h=dr^2+m^2(r)d\theta^2
$$
on $M\setminus \{p,q\}$, where $q$ is the unique cut point of $p$ and
$$
m(r)=\sqrt{h\left(\frac{\partial}{\partial \theta},\frac{\partial}{\partial\theta}\right)}
$$
(see \cite{SST} or \cite{ST} for details).

Moreover, the functions $m(r)$ and $m(2a-r)$ can be extended to smooth odd function around $r=0$, where $d_h(p,q)=2a$, $m'(0)=1=-m'(2a)$.

It is well-known that any pole $p\in M$ must have a unique cut point $q\in M$, and that any geodesic starting from $p$ contains $q$. 

For the sake of simplicity, we consider $a=\frac{\pi}{2}$, that is $m:[0,\pi]\to[0,\infty)$ will satisfy $m(0)=0$, $m'(0)=1$, $m(\pi-r)=m(r)>0$, for all $r\in(0,\pi)$, see Figure \ref{fig_two-sphere}.

\begin{figure}[H]
	\begin{center}
\includegraphics[scale=0.3]{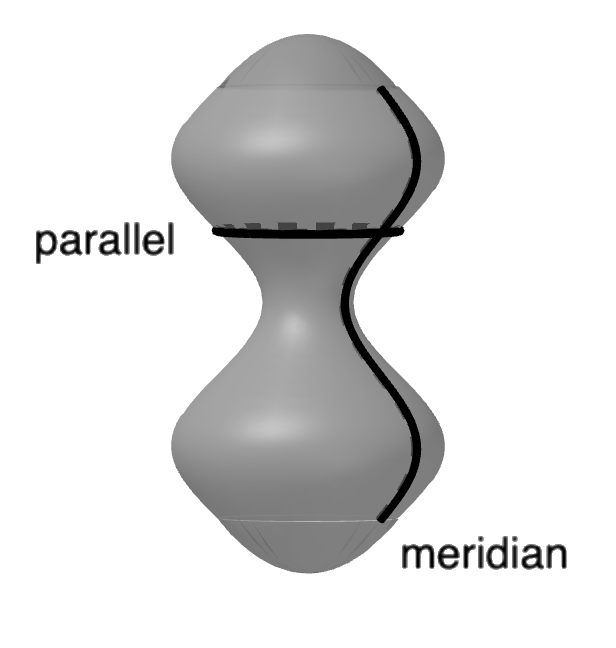}
	\end{center}
	\caption{A two-sphere of revolution.}
	\label{fig_two-sphere}
\end{figure}

Recall (see \cite{SST}) that the equations of an $h$-unit speed geodesic $\gamma(s):=(r(s),\theta(s))$ of $(M,h)$ are 
\begin{equation*}
\begin{cases}
\dfrac{d^2r}{ds^2}-mm'\left(\dfrac{d\theta}{ds}\right)^2=0, \vspace{0.5cm}\\
\dfrac{d^2\theta}{ds^2}+2\dfrac{m'}{m}\left(\dfrac{dr}{ds}\right)\left(\dfrac{d\theta}{ds}\right)=0,
\end{cases}
\end{equation*}
where $s$ is the arclength parameter of $\gamma$ with the $h$-unit speed parameterization condition
$$
\left(\dfrac{dr}{ds}\right)^2+m^2\left(\dfrac{d\theta}{ds}\right)^2=1.
$$

It follows that every profile curve, or meridian, $\{\theta=\theta_0\}$ with $\theta_0$ constant is an $h$-geodesic, and that a parallel $\{r=r_0\}$, with $r_0\in (0,2a)$ constant, is geodesic if and only if $m'(r_0) = 0$.
We observe that the geodesics equations implies
$$
\dfrac{d\theta(s)}{ds}m^2(r(s))=\nu,\ \text{where $\nu$ is constant},
$$
that is, the quantity $\frac{d\theta}{ds}m^2$ is conserved along the $h$-geodesics.
\begin{Lemma}(The Clairaut relation)
 Let $\gamma(s)=(r(s),\theta(s))$ be an $h$-unit speed geodesic on $(M,h)$. There exists a constant $\nu$ such that
$$
m(r(s))\cos\Phi(s)=\nu
$$
holds for any $s$, where $\Phi(s)$ denotes the angle between the tangent vector of $\gamma(s)$ and profile curve. 
\end{Lemma}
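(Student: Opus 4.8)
The plan is to reduce the statement to the conservation law already recorded just above it, namely that $\frac{d\theta}{ds}\,m^{2}(r(s))$ is constant along any $h$-geodesic $\gamma$. That quantity was obtained by integrating the second geodesic equation: multiplying $\ddot\theta+2\frac{m'}{m}\dot r\dot\theta=0$ by $m^{2}$ turns the left-hand side into $\frac{d}{ds}\bigl(m^{2}\dot\theta\bigr)$, since $\frac{d}{ds}m^{2}=2mm'\dot r$. I may therefore take as given that there is a constant $\nu$ with $m^{2}(r(s))\,\frac{d\theta}{ds}=\nu$ for all $s$. The remaining content of the lemma is then purely geometric: to identify $\nu$ with $m(r(s))\cos\Phi(s)$.

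For this I would resolve the unit tangent $\dot\gamma(s)=\frac{dr}{ds}\,\partial_r+\frac{d\theta}{ds}\,\partial_\theta$ against the coordinate frame. Because $h=dr^{2}+m^{2}\,d\theta^{2}$, the vectors $\partial_r$ and $\partial_\theta$ are $h$-orthogonal with $|\partial_r|_h=1$ and $|\partial_\theta|_h=m$, so the unit vector tangent to the parallel $\{r=\text{const}\}$ is $\frac{1}{m}\partial_\theta$. Taking the $h$-inner product of $\dot\gamma$ with this unit vector gives
\[
\cos\Phi(s)=h\!\left(\dot\gamma(s),\,\tfrac{1}{m}\partial_\theta\right)=\tfrac{1}{m}\,m^{2}\,\frac{d\theta}{ds}=m(r(s))\,\frac{d\theta}{ds}.
\]
Multiplying by $m$ yields $m(r(s))\cos\Phi(s)=m^{2}(r(s))\frac{d\theta}{ds}=\nu$, which is exactly the asserted relation; the unit-speed hypothesis $|\dot\gamma|_h=1$ is what makes the normalization trivial. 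Here $\Phi$ is taken as the angle between $\dot\gamma$ and the parallel direction $\partial_\theta$; had one instead measured the angle $\wt\Phi$ from the profile curve $\partial_r$, the same decomposition would read $\frac{dr}{ds}=\cos\wt\Phi$, $m\frac{d\theta}{ds}=\sin\wt\Phi$, so that the conserved quantity appears as $m\sin\wt\Phi=\nu$ with $\Phi=\tfrac{\pi}{2}-\wt\Phi$.

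The only point requiring care is precisely this orientation convention for the angle: one must fix whether $\Phi$ is read off from the meridian or from the parallel so that the factor landing on $m$ is a cosine rather than a sine. I expect this bookkeeping, rather than any analytic difficulty, to be the sole obstacle, and it is settled the instant $\cos\Phi$ is written as the normalized $h$-projection of $\dot\gamma$ onto the appropriate coordinate direction. As a consistency check one can verify $\left(\frac{dr}{ds}\right)^{2}=\sin^{2}\Phi$ directly from the unit-speed condition $\left(\frac{dr}{ds}\right)^{2}+m^{2}\left(\frac{d\theta}{ds}\right)^{2}=1$, which confirms that $\Phi$ is a genuine angle between unit vectors and that the sign of $\nu$ encodes the sense in which $\gamma$ winds around the axis.
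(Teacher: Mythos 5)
Your proof is correct and follows essentially the same route as the paper, which states this lemma without a separate proof and relies precisely on the conservation law $m^2(r(s))\,\frac{d\theta}{ds}=\nu$ derived from the geodesic equations immediately beforehand --- exactly your starting point. Your explicit identification $\cos\Phi = h\!\left(\dot\gamma,\tfrac{1}{m}\partial_\theta\right) = m\,\frac{d\theta}{ds}$, together with your remark that the cosine form requires $\Phi$ to be measured from the parallel (the paper's wording ``angle with the profile curve'' would instead give $m\sin\Phi=\nu$), supplies the only step the paper leaves implicit.
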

The constant $\nu$ is called the Clairaut constant of $\gamma$.

Several characterization of the cut locus of a Riemannian two-sphere of revolution are known (see \cite{BCST}, \cite{ST}, \cite{TASY}).

We recall the following important result from \cite{TASY}.

\begin{Proposition}\label{prop_1}
Let $h:[0,\pi]\to\R$ be a smooth function that can be extended to an odd smooth function on $\R$. If
\begin{enumerate}[(c1)]
\item $h(\pi-r)=\pi-h(r)$, for any $r\in[0,\pi]$;
\item $h'(r)>0$, for any $r\in\left[0,\frac{\pi}{2}\right)$;
\item $h''(r)>0$, for any $r\in\left(0,\frac{\pi}{2}\right)$,
\end{enumerate}
then
\begin{enumerate}[(i)]
\item the function $m:[0,\pi]\to\R$ given by $m(r):=a\sin h(r)$, where $a=\frac{1}{h'(0)}$, is the warp function of a two-sphere of revolution $M$.
\item Moreover, if $h''(r)>0$ on $\left(0,\frac{\pi}{2}\right)$, then the cut locus of a point $q=(r_0,0)\in M$ coincides with a subarc of the antipodal parallel $r=\pi-r_0$.
\end{enumerate}
\end{Proposition}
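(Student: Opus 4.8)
The plan is to treat the two parts separately, since (i) uses only (c1), (c2) and the oddness of $h$, whereas (ii) is where convexity (c3) does the real work.

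For (i) I would simply verify that $m(r)=a\sin h(r)$ satisfies the defining conditions of a warp function recorded in Section~\ref{sec_two_sphere}: $m(0)=0$, $m'(0)=1$, $m(\pi-r)=m(r)>0$ on $(0,\pi)$, and the odd smooth extendability of $m$ and of $m(\pi-\cdot)$ about the origin. First I would record the elementary consequences of the hypotheses: oddness of $h$ forces $h(0)=0$; differentiating (c1) gives $h'(\pi-r)=h'(r)$, so (c2) upgrades to $h'>0$ on all of $[0,\pi]$, whence $h$ is a strictly increasing bijection of $[0,\pi]$ onto itself with $h(\pi/2)=\pi/2$ (set $r=\pi/2$ in (c1)) and $h(\pi)=\pi$. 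Then $m(0)=a\sin 0=0$; from $m'(r)=a\cos h(r)\,h'(r)$ and $a=1/h'(0)$ we get $m'(0)=a\,h'(0)=1$; the identity $\sin(\pi-x)=\sin x$ with (c1) yields $m(\pi-r)=a\sin(\pi-h(r))=m(r)$; and $h(r)\in(0,\pi)$ on $(0,\pi)$ makes $\sin h(r)>0$. The odd smooth extendability of $m$ about $r=0$ is immediate since $h$ and $\sin$ are both odd; the analogous property for $r\mapsto m(\pi-r)$ encodes smoothness at the second pole and follows from (c1) together with $h(\pi)=\pi$ and $h'(\pi)=h'(0)$, which also gives the normalization $m'(\pi)=-1$. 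No convexity is needed here.

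For (ii) I would first bring in the two reflective isometries of $(M,h)$: the meridian reflection $\theta\mapsto-\theta$, which fixes $q=(r_0,0)$, and the equatorial reflection $r\mapsto\pi-r$, available because $m(\pi-r)=m(r)$; by the latter I may assume $r_0\le \pi/2$. The meridian reflection forces $\mathrm{Cut}(q)$ to be invariant under $\theta\mapsto-\theta$, and the structure theory of the cut locus on a two-sphere of revolution (see \cite{ST}, \cite{TASY}) confines $\mathrm{Cut}(q)$ to the union of the antipodal parallel $\{r=\pi-r_0\}$ and the opposite meridian $\{\theta=\pi\}$. The whole content of (ii) is therefore to exclude any subarc of the opposite meridian, i.e.\ to show that along every geodesic issuing from $q$ the first cut point is reached on $\{r=\pi-r_0\}$. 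Parametrizing geodesics by their Clairaut constant $\nu$ and using the Clairaut relation with the unit-speed condition, the $\theta$-progress of a geodesic is governed by $d\theta/dr=\pm\nu/\bigl(m\sqrt{m^2-\nu^2}\bigr)$, so the decisive quantity is the half-period angle
\begin{equation*}
\varphi(\nu)=2\int_{\xi(\nu)}^{\pi/2}\frac{\nu\,dr}{m(r)\sqrt{m(r)^2-\nu^2}},\qquad m(\xi(\nu))=\nu,
\end{equation*}
the angle swept as a geodesic runs from its lower turning parallel to the equator.

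The key step is to prove that $\varphi$ is strictly monotone in $\nu$, since in the framework of \cite{ST}, \cite{TASY} a nowhere-vanishing $\varphi'$ is exactly what rules out a meridian part and collapses $\mathrm{Cut}(q)$ onto a subarc of the antipodal parallel. Here the special form $m=a\sin h$ makes the hypotheses bite: substituting $u=h(r)$ and $k=\nu/a\in(0,1)$ transforms the integral into
\begin{equation*}
\varphi(\nu)=\frac{2k}{a}\int_{\arcsin k}^{\pi/2}\frac{du}{\sin u\,\sqrt{\sin^2 u-k^2}\;h'\bigl(h^{-1}(u)\bigr)},
\end{equation*}
in which the round-sphere case $h'\equiv 1$ gives the constant value $\varphi\equiv\pi$. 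Thus the monotonicity of $\varphi$ is driven entirely by the weight $1/h'(h^{-1}(u))$, which by (c3) is strictly decreasing in $u$; I would differentiate in $k$ and show that this monotone weighting breaks the round-sphere degeneracy with a definite sign, so that $\varphi'(\nu)\neq 0$ throughout. I expect this monotonicity estimate to be the main obstacle: the integrand is singular at the lower endpoint $u=\arcsin k$, where $\sqrt{\sin^2 u-k^2}$ vanishes, and the endpoint itself depends on $k$, so the differentiation must be organized to isolate a convergent integral with a sign forced by $h''>0$ — most cleanly by a further change of variable removing the moving singularity before differentiating. Once $\varphi'\neq 0$ is in hand, combining it with the two symmetries and the cut-locus structure theorem of \cite{TASY} yields that $\mathrm{Cut}(q)$ is precisely a subarc of the antipodal parallel $r=\pi-r_0$, completing (ii).
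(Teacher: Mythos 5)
Your proposal follows essentially the same route as the paper's proof: part (i) is the same direct verification that $m=a\sin h(r)$ satisfies the warp-function conditions (the paper states this in one sentence; your detailed check of $m(0)=0$, $m'(0)=1$, $m(\pi-r)=m(r)>0$ is exactly what is meant), and part (ii) reduces the cut-locus statement to the monotonicity of the half-period function
$$
\varphi_m(\nu)=2\int_{m^{-1}(\nu)}^{\frac{\pi}{2}}\frac{\nu\,dr}{m(r)\sqrt{m^2(r)-\nu^2}},
$$
which is precisely what the paper does, citing Lemma 1 and Proposition 1 of \cite{TASY} for the fact that (c3) makes $\varphi_m$ decreasing. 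Your substitution $u=h(r)$, $k=\nu/a$, which isolates the weight $1/h'\bigl(h^{-1}(u)\bigr)$ and identifies (c3) as the driver of the monotonicity, is a reasonable sketch of that cited argument.

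Two points in your part (ii) need correcting, however. First, the criterion you state --- that ``a nowhere-vanishing $\varphi'$ is exactly what rules out a meridian part'' --- is wrong as written: the structure results of \cite{ST}, \cite{TASY} require $\varphi_m$ to be \emph{decreasing}. A strictly increasing half-period function (as on a prolate spheroid) also has $\varphi'\neq 0$ everywhere, yet there the cut locus is a subarc of the opposite meridian, not of the antipodal parallel; so non-vanishing of $\varphi'$ alone does not yield the conclusion. Your own computation is positioned to give the correct sign --- the unweighted integral is identically $\pi$, and by (c3) the weight $1/h'\bigl(h^{-1}(u)\bigr)$ is strictly decreasing while the domain of integration concentrates toward $u=\pi/2$ as $k$ grows, which forces $\varphi_m$ to decrease --- so the fix is to prove decrease rather than mere non-vanishing. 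Second, your preliminary claim that general structure theory already confines $\mathrm{Cut}(q)$ to the union of the antipodal parallel and the opposite meridian is not a hypothesis-free fact for two-spheres of revolution; in \cite{ST} and \cite{TASY} any such confinement is itself a consequence of the monotonicity hypothesis. The argument should therefore pass directly from ``$\varphi_m$ decreasing'' to the conclusion via Proposition 1 of \cite{TASY}, exactly as the paper does.
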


\begin{proof}
We give only the proof outline here, for details please consult \cite{TASY}. It can be seen that conditions (c1), (c2) imply that the function $m:[0,\pi]\to\R$ is positive, and $m(0)=0$, $m'(0)=1$, $m(\pi-r)=m(r)>0$ for $r\in(0,\pi)$, hence the two surface of revolution is well-defined.

Moreover, if (c3) holds good, then it can be proved that the half period function
$$
\varphi_m(\nu):=2\int_{m^{-1}(\nu)}^{\frac{\pi}{2}}\frac{\nu}{m(r)\sqrt{m^2(r)-\nu^2}}dr
$$
is decreasing, where $\nu$ is the Clairaut constant, hence the conclusion follows (see Lemma 1 and Proposition 1 in \cite{TASY}).

$\qedd$
\end{proof}

\begin{Remark}
Observe that $h(0)=0$, $h\left(\frac{\pi}{2}\right)=\frac{\pi}{2}$, $h(\pi)=\pi$, and the graph of $h$ looks like in Figure \ref{Fig2}.

\begin{figure}[H]
	\begin{center}
		\setlength{\unitlength}{0.2 cm}
		\begin{tikzpicture}[scale=1]
		\draw[->] (-1,0) -- (8,0) node[right] {$r$};
		\draw[->] (0,-1) -- (0,4) node[above] {$y$};
		\draw[smooth,color=blue,rotate=90,scale=1,xshift=1.25cm,yshift=-3.1cm] plot[domain=-.4*pi:.4*pi] (\x,{-tan(\x r)});

		\draw[-,dashed] (6.2,-1) -- (6.2,2.5) node[above] {$h(r)$};
		\draw[-,dashed] (6.2,2.5) -- (0,2.5) node[left] {$\pi$};
		\draw[-,dashed] (3.1,1.3) -- (0,1.3) node[left] {$\dfrac{\pi}{2}$};
		\draw[-,dashed] (3.1,-1) -- (3.1,1.3);
		\draw(0,0) node[below left]{$0$};
		\draw(6.2,-1.2) node[below]{$\pi$};
		\draw(3.1,-1.5) node{$\dfrac{\pi}{2}$};
		
		\end{tikzpicture}
	\end{center}
	\caption{The outline of the graph of $h$.}\label{Fig2}
\end{figure}
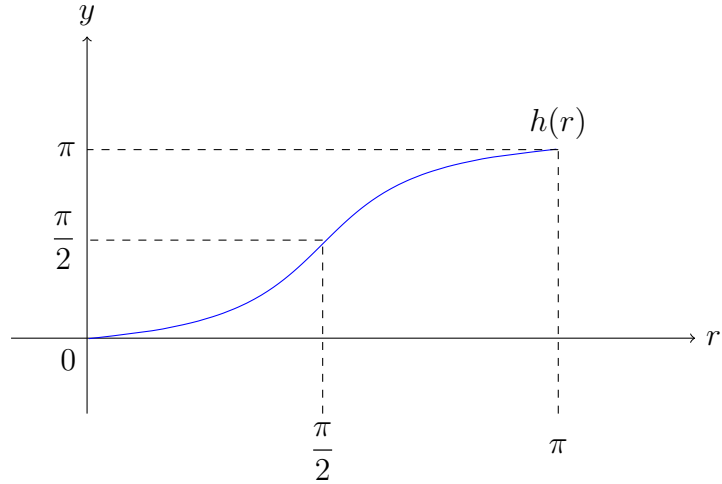
\end{Remark}

\begin{Definition}
A Riemannian (or Finsler) two-sphere of revolution whose cut locus is a subarc of a parallel will be called  with simple cut locus.
\end{Definition}

\begin{Remark}
This naming is related to graph theory in the sense that simple cut locus means that the cut locus is a simple graph with 2 vertices and one edge.
\end{Remark}
We recall some examples given in \cite{TASY}.

\begin{Example}\label{ex_1}
\begin{enumerate}[(i)]
\item If $h(r)=r-\alpha\sin(2r)$, for any $\alpha\in\left(0,\frac{1}{2}\right)$, one can see that 
$$
m(r)=a\sin(r-\alpha\sin(2r)).
$$
It follows that
\begin{equation*}
\begin{split}
m'(r)&=a\cos(r-\alpha\sin(2r))[1-2\alpha\cos(2r)],\\
m''(r)&=a\cos(r-\alpha\sin(2r))[-4\alpha\cos(2r)]-a[1-2\alpha\cos(2r)]\sin(r-\alpha\sin(2r))[1-2\alpha\cos(2r)].
\end{split}
\end{equation*}

Observe that the Gaussian curvature is
\begin{equation*}
\begin{split}
G(r)&=-\frac{m''(r)}{m(r)}\\&=\frac{1}{a\sin(r-\alpha\sin(2r))}
\left\{-a\cos(r-\alpha\sin(2r))[-4\alpha\cos(2r)]\right. \\
&\quad \left. +a[1-2\alpha\cos(2r)]\sin(r-\alpha\sin(2r))[1-2\alpha\cos(2r)]\right\}\\
&=4\alpha\cos(2r)\cot(r-\alpha\sin(2r))+[1-2\alpha\cos(2r)]^2,
\end{split}
\end{equation*}
which clearly is not monotone on $[0,\pi]$, see Figure \ref{fig_ex_1}.
On the other hand, it is easy to check that this $h$ satisfies conditions (c1), (c2), (c3) in Proposition \ref{prop_1}, hence it results that the Riemannian surface of revolution with the warp function $m$ has simple cut locus. 

\begin{figure}[H]
	\begin{center}
		\setlength{\unitlength}{0.2cm}
		\begin{tikzpicture}[scale=1]
		\draw[->] (0,-1) -- (0,4) node[above] {$y$};
		 \draw [->] (-2,0) -- (4,0) node[below]{$r$};
		 \draw [->] (0,-2) -- (0,4) ;
		\draw[smooth] plot[domain=0.07*pi:0.9*pi] (\x,{cos(2*\x r)*cot(\x r-0.25*sin(2*\x r))+(1-0.5*cos(2*\x r))^2});
\draw(0,0) node[below left]{$0$};

		\end{tikzpicture}
	\end{center}
	\caption{The graph of $G(r)$ in Example \ref{ex_1} (i), $r\in(0,\pi)$, where $\alpha=\frac{1}{4}$.}\label{fig_ex_1}
\end{figure}
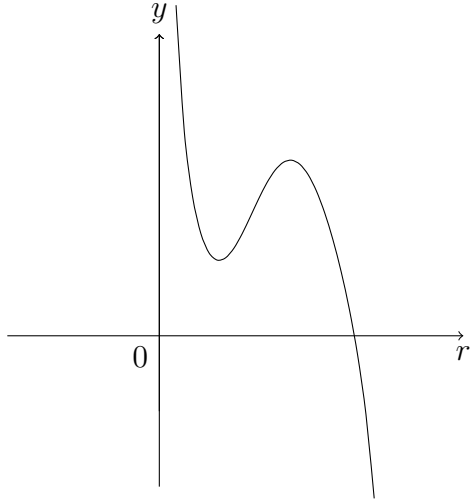 

\item If $h(r)=\arcsin\frac{\sin r}{\sqrt{1+\lambda\cos^2r}}$, for any $\lambda\geq 0$, it follows that
$$
m(r)=a\sin\left(\arcsin\frac{\sin r}{\sqrt{1+\lambda\cos^2r}}\right)=\frac{a\sin r}{\sqrt{1+\lambda\cos^2r}},
$$
therefore
\begin{equation*}
\begin{split}
m'(r)&=\frac{a}{1+\lambda\cos^2r}\left[\cos\sqrt{1+\lambda\cos^2r}
+\frac{\lambda\cos r\sin^2 r}{\sqrt{1+\lambda\cos^2 r}}
\right]
=\frac{a(1+\lambda)\cos r}{(1+\lambda\cos^2r)^{3/2}},\\
m''(r)&=\frac{a(1+\lambda)}{(1+\lambda\cos^2r)^3}\left[
-(1+\lambda\cos^2r)^{3/2}\sin r+3\lambda\cos^2 r\sin r (1+\lambda\cos^2r)^{1/2}
\right]\\
&=\frac{a(1+\lambda)}{(1+\lambda\cos^2r)^{5/2}}\left[
-\sin r(1+2\lambda\cos^2r+\lambda^2\cos^4r)+3\lambda\cos^2 r\sin r 
\right]\\
&=\frac{a(1+\lambda)\sin r}{(1+\lambda\cos^2r)^{5/2}}\left[
-1+\lambda\cos^2r-\lambda^2\cos^4r
\right].
\end{split}
\end{equation*}
We obtain the Gaussian curvature as follows
\begin{equation*}
\begin{split}
G(r)&=-\frac{m''(r)}{m(r)}
=\frac{(1+\lambda)(1-\lambda\cos^2r+\lambda^2\cos^4r)}{(1+\lambda\cos^2r)^2}
\end{split}
\end{equation*}
which again is not monotone on $[0,\pi]$, see Figure \ref{fig_ex_2}.

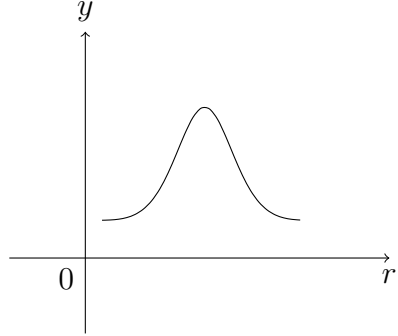
\begin{figure}[H]
	\begin{center}
		\setlength{\unitlength}{1cm}
		\begin{tikzpicture}[scale=1]
		 \draw [->] (-1,0) -- (4,0) node[below]{$r$};
		 \draw [->] (0,-1) -- (0,3) node[above] {$y$};
		\draw[smooth] plot[domain=0.07*pi:0.9*pi] (\x,{2*(1-cos(\x r)^2+cos(\x r)^4)/(1+cos(\x r)^2)^2});
\draw(0,0) node[below left]{$0$};

		\end{tikzpicture}
	\end{center}
	\caption{The graph of $G(r)$ in Example \ref{ex_1} (ii), $r\in(0,\pi)$, where $\lambda=1$.}\label{fig_ex_2}
\end{figure}
\end{enumerate}
This second example also  satisfies the conditions (c1), (c2), (c3)  in Proposition \ref{prop_1}. Hence it provides a two-sphere of revolution with simple cut locus.
\end{Example}

\begin{Remark}
A more complicated sequence of functions $h_n(r)$ with simple cut locus is constructed in \cite{TASY}, Theorem 1.
\end{Remark}

\section{Randers two-spheres of revolution}\label{sec: three Randers sphere}
We will show the existence of Randers two-spheres of revolution with simple cut locus using the following basic construction:

\begin{figure}[H]
\begin{center}
\setlength{\unitlength}{1cm}
\begin{picture}(12,1)

\put(0,0){$(M,h)$}
\put(0.5,-0.2){\vector(0,-1){2}}
\put(0,-2.7){$(M,F_0=\alpha_0+\beta_0)$}
\put(3.5,-2.6){\vector(1,0){5}}
\put(8.7,-2.7){$(M,F_1=\alpha_1+\beta_1)$}
\put(9.2,-2.2){\vector(0,1){2}}
\put(8.7,0){$(M,F_2=\alpha_2+\beta_2)$}

\put(-2,-1){$V_0,\|V_0\|_h<1$}
\put(-2,-1.7){$V_0$: $h$-Killing}

\put(4.3,-2.4){$V,F_0(-V)<1$}
\put(4.3,-3){$V$: $F_0$-Killing}

\put(9.4,-1){$W,F_1(-W)<1$}
\put(9.4,-1.7){$d\beta_2=0$}

\put(-2,-3.5){Navigation data: $(h,V_0)$}

\put(8,-3.5){Navigation data: $(h,V_0+V)$}

\put(8,0.7){Navigation data: $(h,V_0+V+W)$}

\end{picture}
\end{center}
\end{figure}
\bigskip
\vspace{3cm}
\noindent where $(M,h)$ is a Riemannian manifold, and $V_0,V,W\in\mathcal{X}(M)$ are vector fields on $M$.

It is known that in general, the navigation data $(h,V)$, where $(M,h)$ is a Riemannian metric and $V$ a vector field on $M$ such that $\|V\|_h<1$, induces the Randers metric
$$
F=\alpha(x,y)=\beta(x,y)=\frac{\sqrt{\lambda\|y\|^2_h+h(y,V)}}{\lambda}-\frac{h(y,V)}{\lambda}.
$$

Here $\lambda:=1-\|V\|^2_h$ and $h(y,V)=h_{ij}V^iy^j$ is the $h$-inner product of the vectors $V$ and $y$.

Conversely, the Randers metric $F=\alpha+\beta$, where $\alpha=\sqrt{a_{ij}(x)y^iy^j}$ is a Riemannian metric and $\beta=b_i(x)y^i$ a linear one-form on $TM$, induces the navigation data $(h,V)$ given by
$$
h^2=\varepsilon(\alpha^2-\beta^2),\ V=-\frac{1}{\varepsilon}\beta^\#.
$$

Here $h^2=h_{ij}(x)y^iy^j$, $\varepsilon:=1-\|b\|^2_\alpha$, and $\beta^\#$ is the Legendre transform of $\beta$, i.e.,
$$
\beta^\#=b_iy^i=a_{ij}b^iy^j
$$
(see \cite{BR}, \cite{BRS}, \cite{R} for details).

We recall some definitions for later use.

A vector field $X$ on $T^*M$ is called Hamiltonian vector field if there exists a smooth function $f:T^*M\to\R$, $(x,p)\mapsto f(x,p)$ such that
$$ X_f=\frac{\partial f}{\partial p_i}\frac{\partial}{\partial x^i}-\frac{\partial f}{\partial x^i}\frac{\partial}{\partial p_i}.$$

For instance, we can consider the Hamiltonian vector fields of the lift $W^*:=W^i(x)p_i$ of $W=W^i\frac{\partial}{\partial x^i}$ to $T^*M$, or of the Hamiltonian $\cK(x,p)$, the Legendre dual of any Finsler metric $F(x,y)$ on $M$ (see \cite{MHSS}).

Indeed, on a Finsler manifold $(M,F)$, for any $y\in T_xM\setminus\{0\}$ one can define
$$
p(y):=\frac{1}{2}\frac{d}{dt}\left[F^2(x,y+tv)\right]\big\vert_{t=0},\ v\in T_xM,
$$
and obtain in this way the map
\begin{equation*}
\begin{split}
\cL:TM &\to T^*M,\\
(x,y)&\mapsto (x,p),
\end{split}
\end{equation*}
called the Legendre transformation of $F$.

The curve $\hat{\gamma}(t)=(x(t),p(t)):[a,b]\to T^*M$ is called the integral curve (or sometimes the flow) of a Hamiltonian vector field $X_f\in \mathcal{X}(T^*M)$ if
$$
\frac{d\hat{\gamma}(t)}{dt}=X_{f}|_{\hat{\gamma}(t)}.
$$
More precisely, the mapping $\phi:\R\times T^*M\to T^*M$, $(t, (x,p))\mapsto 
\phi(t, (x,p))$, denoted also by $\phi_t(x,p)$ or $\phi_{(x,p)}t$, satisfying the properties
\begin{enumerate}[(i)]
\item $\phi(0,(x,p))=(x,p)$, for any $(x,p)\in T^*M$;
\item $\phi_s\circ \phi_t=\phi_{s+t}$, for all $s,t\in \R$;
\item $\dfrac{d\phi_{(x,p)}{t}}{dt}\vert_{t=0}=X\vert_{(x,p)}$,
\end{enumerate}
is called the one-parametric group, or simply the flow, of the vector field $X\in \mathcal{X}(T^*M)$. A given one-parametric group always induces a vector field $X\in \mathcal{X}(T^*M)$. Conversely, a given vector field $X\in \mathcal{X}(T^*M)$ induces only locally a one-parametric group, sometimes called the local flow of $X$. 

A smooth vector field $X\in \mathcal{X}(M)$ on a Finsler manifold $(M,F)$ is called F-Killing field if every local one-parameter transformation group $\{\varphi_t\}$ of $M$ generated by $X$ consists of local isometries of $F$. 
The vector field $X$ is F-Killing if and only if 
$L_{\widehat{X}}F=0$, where $L$ is the Lie derivative, and $\widehat{X}:=X^i\dfrac{\partial}{\partial x^i}+y^j\dfrac{\partial X^i}{\partial x^j}\dfrac{\partial}{\partial y^i}$ is the canonical lift of $X$ to $TM$, or, locally $X_{i|j}+X_{j|i}+2C_{ij}^pX_{p|q}y^q=0$, where `` $|$\,'' is the $h$-covariant derivative with respect to the Chern connection.

Moreover, in the Hamiltonian formalism,  the vector field $X$ on $M$ is Killing field with respect to $F$ if and only if 
	$$
	\{\mathcal{K},W^*\}=0,
	$$
	where $\mathcal{K}$ is the Legendre dual of $F$ (see \cite{MHSS}), $W^*=W^i(x)p_i$ and $\{\cdot,\cdot\}$ is the Poisson bracket.
\begin{Lemma}[Generalization of Hrimiuc-Shimada's result, see \cite{MHSS}]\label{lem_A1}
Let $(M,\widetilde{F}=\widetilde{\alpha}+\widetilde{\beta})$ be a Randers metric with general navigation data $(F=\alpha+\beta,W)$, $F(-W)<1$. Then the Legendre dual of $\widetilde{F}$ is $\widetilde{\cK}:T^*M\to\R$, $\widetilde{\cK}=\cK+W^*$, where $\cK$ is the Legendre dual of $F$ and $W^*=W^i(x)p_i$.
\end{Lemma}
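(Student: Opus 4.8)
The plan is to characterize the Legendre dual $\cK$ of a Finsler metric as the support function of its unit ball, and then to exploit the fact that in Zermelo's general navigation the unit ball is merely rigidly translated by $W$.

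First I would record the support-function description of the Legendre dual. For a Finsler metric $F$ the Legendre transform $\cL$ sends a unit vector $y$ (with $F(x,y)=1$) to the covector $p=\cL(x,y)$, $p_i=\frac12\partial_{y^i}(F^2)$, which by Euler's relation $\frac12 y^i\partial_{y^i}(F^2)=F^2$ satisfies $\langle p,y\rangle=F(x,y)^2=1$ and supports the unit ball $B_F(x)=\{y:F(x,y)\le1\}$ at $y$. Consequently the Legendre dual admits the representation
\begin{equation*}
\cK(x,p)=\sup_{y\neq0}\frac{\langle p,y\rangle}{F(x,y)}=\sup_{F(x,y)\le1}\langle p,y\rangle,
\end{equation*}
that is, $\cK(x,\cdot)$ is the support function of the convex body $B_F(x)$. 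It is positively $1$-homogeneous in $p$, matching the homogeneity degree of $W^*$.

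Next I would establish the translation property of general navigation. By the very construction of the general navigation data $(F,W)$, the indicatrix of $\widetilde F$ is the rigid $W$-translate of the indicatrix of $F$, so $\widetilde F(x,v)=1$ exactly when $F(x,v-W)=1$; by positive homogeneity this is the implicit relation $F\bigl(x,v-\widetilde F(x,v)\,W\bigr)=\widetilde F(x,v)$. Equivalently, the unit balls satisfy $B_{\widetilde F}(x)=B_F(x)+W$. The hypothesis $F(-W)<1$ is precisely what places the origin in the interior of $B_{\widetilde F}(x)$, which guarantees that $\widetilde F$ is positive on $T_xM\setminus\{0\}$ and that the relevant suprema are finite and attained. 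The conclusion is then immediate from the behaviour of support functions under translation:
\begin{equation*}
\widetilde\cK(x,p)=\sup_{v\in B_{\widetilde F}(x)}\langle p,v\rangle=\sup_{u\in B_F(x)}\langle p,u+W\rangle=\sup_{u\in B_F(x)}\langle p,u\rangle+\langle p,W\rangle=\cK(x,p)+W^*(x,p),
\end{equation*}
since $W^*=W^i(x)p_i=\langle p,W\rangle$. As both sides are positive and $1$-homogeneous in $p$, this identity of support functions identifies $\widetilde\cK=\cK+W^*$ as the Legendre dual of $\widetilde F$.

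The main obstacle is the first step: one must justify carefully, in the non-reversible Finsler setting, that the Legendre dual coincides with the support function of the unit ball, and that the translated body $B_F(x)+W$ is again a smooth, strongly convex body containing the origin in its interior (this is where $F(-W)<1$ together with the strong convexity of $F$ enter), so that $\widetilde\cK=\cK+W^*$ is a genuine Finsler co-metric and $\widetilde F$ a genuine positive-definite Finsler metric. Once the support-function dictionary is in place, the translation argument is purely formal. I would also remark that the same computation can be phrased intrinsically through the Legendre transformation $\cL_{\widetilde F}$, by verifying directly that $(\cK+W^*)\circ\cL_{\widetilde F}=\widetilde F$, which recovers the original Hrimiuc--Shimada Randers computation as the special case $F=\alpha+\beta$.
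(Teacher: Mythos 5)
Your argument is correct, but it takes a genuinely different route from the paper's. The paper stays entirely inside the smooth Legendre-transform formalism: it observes that rigid translation of the indicatrix keeps tangent hyperplanes parallel, so that $\cL_{\widetilde{F}}(u+W_x)=c(u)\,\cL_F(u)$ for some scalar $c(u)$; it then computes $c(u)=1/\bigl(1+\langle\cL_F(u),W\rangle\bigr)$ by pairing this identity against $u$ and $W_x$ and using $\widetilde{F}(u+W_x)=1$, and finally uses the homogeneity of $\widetilde{\cK}$ to get $\widetilde{\cK}=\cK+W^*$ on the image of the indicatrix, hence everywhere. You instead identify the Legendre dual with the support function of the unit ball, $\cK(x,p)=\sup_{F(x,y)\le 1}\langle p,y\rangle$, after which the lemma collapses to the additivity of support functions under translation of the body, $B_{\widetilde{F}}(x)=B_F(x)+W(x)$. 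What your route buys: it is shorter and more conceptual once the dictionary is in place, it makes the role of the hypothesis $F(-W)<1$ completely transparent (the origin must lie in the interior of the translated ball), and it exhibits the statement as a fact of convex geometry rather than of Finsler calculus. What the paper's route buys: it never needs the support-function characterization --- whose proof in the non-reversible setting is exactly the fundamental inequality $\langle\cL_F(y),v\rangle\le F(y)F(v)$, a burden comparable to the paper's parallel-tangent-planes observation, and which you rightly flag as the main obstacle but only sketch --- and it produces the explicit factor $c(u)=1/\bigl(1+\langle u,W_x\rangle_{g_x(u)}\bigr)$ relating $\cL_{\widetilde{F}}$ to $\cL_F$, which is of independent interest. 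One point to repair when writing your argument in full: your claim that $p=\cL_F(y)$ ``supports the unit ball at $y$'' does not follow from Euler's relation alone; it is precisely the fundamental inequality (equivalently, strong convexity of $F$), so it must be cited or proved there.
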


\begin{proof}
Indeed, let $F=\alpha+\beta$ be a positive defined Randers metric on a differentiable manifold $M$ with indicatrix $\sum_F(x)=\{y\in T_xM:\ F(x,y)=1\}\subset T_xM $, and let $W\in\mathcal{X}(M)$ be a vector field such that $F(-W)<1$.

Let us denote by $\widetilde{\sum}(x):=\sum_F(x)+W(x)$ the rigid translation of $\Sigma_F(x)$ by $W(x)$, i.e.,
$$
\widetilde{\sum}(x):=\{y=u+W\in T_xM:\ F(u)=1\}.
$$

Firstly, observe that by rigid translation, the tangent vectors to $\sum_F$ and $\widetilde{\sum}$ remain parallel, i.e., there exists a smooth function $c(u)\neq 0$ such that
\begin{equation}\label{eq_proof_main_1}
\widetilde{Y}_{u+W_x}=c(u)(F_x)_{*,u},
\end{equation}
where $\widetilde{Y}_{u+W_x}$ is the tangent vector to $\widetilde{\sum}$ at $u+W_x$, and $(F_x)_{*,y}:T_y(T_xM)\to T\R\equiv\R$ is the tangent map of $F_x:T_xM\to[0,\infty)$, see Figure \ref{fig_rig_trans}.

\begin{figure}[H]
\begin{center}
\setlength{\unitlength}{1cm}
\begin{tikzpicture}
    \draw(0,0) circle(1.5);
    \draw(1,0.4) circle(1.5);
    \draw [->] (0,0) -- (1.5,0);
    \draw [->] (0,0) -- (0.5,0.5);
    \draw [dotted] (1.5, 0) -- (2.5,0.5);
    \draw [dotted] (0.5, 0.5) -- (2.5,0.5);
    \draw [->] (0,0) -- (2.5,0.5);
    \draw [->] (1.5,0) -- (1.5,1.5);
    \draw [->] (2.5,0.5) -- (2.5,2);
    \node at (-0.1,0.3) {$W$};
    \node at (0.7,-0.2) {$u$};
    \node at (3.2,0.5) {$u+W$};
    \node at (-2,-1) {$\Sigma_F(x)$};
    \node at (0,2) {$\widetilde{\Sigma}$};
    \node at (2.8,2.2) {$\widetilde{Y}$};
    \node at (1.05,1.5) {$(F_x)_*$};
	\node at (-2,2) {$T_xM$};
\end{tikzpicture}
\end{center}
\caption{The rigid translation of the indicatrix.}\label{fig_rig_trans}
\end{figure}
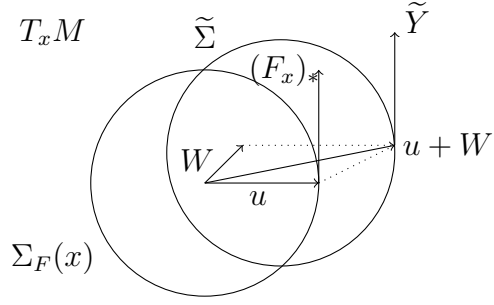

The solution of the Zermelo's navigation problem with data $(F,W)$ is a Finsler metric $\widetilde{F}$ such that
$$
\widetilde{F}_x(u+W_x)=1,
$$
where $u\in T_xM$, $F(x,u)=1$, and $\widetilde{F}_x$ is the restriction of $\widetilde{F}$ to $T_xM$. Since $\widetilde{\sum}$ is the rigid translation of $\sum$ such a Finsler metric must exist.

Second, with these notations, observe that in $T^*_xM$ we have
\begin{equation}\label{eq_proof_main_2}
\cL_{\widetilde{F}}(u+W_x)=c(u)\cL_F(u),
\end{equation}
where $\cL_{\widetilde{F}}$ and $\cL_F$ are the Legendre transformations of $\tilde{F}$ and $F$, respectively. This formula follows directly from \eqref{eq_proof_main_1} and the definition of the Legendre transformation.

Since relation \eqref{eq_proof_main_2} is between one-forms, actually this is a relation between linear transformations of the tangent space $T_xM$. If we pair \eqref{eq_proof_main_2} with $W_x$ and $u$, we get
\begin{equation}\label{eq_proof_main_2_1}
\langle\cL_{\tilde{F}(u+W_x)},W_x\rangle=c(u)\langle\cL_F(u),W\rangle
\end{equation}
and
\begin{equation}\label{eq_proof_main_2_2}
\langle\cL_{\tilde{F}(u+W_x)},u\rangle=c(u)\langle\cL_F(u),u\rangle=c(u),
\end{equation}
respectively, where we have used the fact that $F(u)=1$ is equivalent to $\langle\cL_F(u),u\rangle=1$. Here, $\langle\cdot,\cdot\rangle$ denotes the usual pairing of a one-form with a vector field.

Therefore, by the same reason, since $\tilde{F}(u+W)=1$ we have 
\begin{equation*}
\begin{split}
1&=\langle\cL_{\tilde{F}}(u+W_x),u+W_x\rangle=\langle\cL_{\tilde{F}}(u+W_x),u\rangle+\langle\cL_{\tilde{F}}(u+W_x),W_x\rangle=c(u)+c(u)\langle\cL_F(u),W\rangle,
\end{split}
\end{equation*}
where we use \eqref{eq_proof_main_2_1}, \eqref{eq_proof_main_2_2}. By the way, observe that
$$
c(u)=\frac{1}{1+\langle\cL_F(u),W\rangle}=\frac{1}{1+\langle u,W_x \rangle_{g_x(u)}},
$$
where $\langle\cdot,\cdot\rangle_{g_x(u)}$ is the inner product in $T_xM$ by $g_x(u)$, i.e. $\langle X,Y\rangle_{g_x(u)}=g_{ij}(x,u)X^iY^j$.

Next, let us denote by $\widetilde{\cK}$ and $\cK$ the Legendre dual metrics of $\widetilde{F}$ and $F$, respectively. It follows that
$$
1=\widetilde{\cK}[\cL_{\widetilde{F}}(u+W_x)]=c(u)\widetilde{\cK}(\cL_F(u)),
$$
and thus
\begin{equation*}
\begin{split}
\widetilde{\cK}(\cL_F(u))=\frac{1}{c(u)}=1+\langle\cL_F(u),W\rangle=\cK(\cL_F(u))+\langle\cL_F(u),W\rangle.
\end{split}
\end{equation*}

If we denote $\cL_F(u)=\omega_x=(x,p)\in T^*M$, then
\begin{equation}\label{eq_proof_main_2_2_1}
\widetilde{\cK}_x(p)=\cK_x(p)+\omega_x(W),
\end{equation}
where $\cK_x$ is the $\cL$-dual of $F=\alpha+\beta$.

Therefore, if $\widetilde{F}$ is {the solution of the Zermelo's navigation} (i.e. it is the rigid translation of the  indicatrix $\sum_F$ by $W$) with navigation data $(F,W)$, then
\begin{equation}\label{eq_proof_main_2_2_1*}
\widetilde{\cK}_x(p)=\cK_x(p)+W^*_x(p),
\end{equation}
where $\widetilde{\cK}$ and $\cK$ are the Hamiltonians of $\widetilde{F}$ and $F$, respectively, and $W^*=W^i(x)p_i$.

$\qedd$
\end{proof}

\begin{Lemma}\label{lem_A2}
Let $(M,F=\alpha+\beta)$ be a Randers metric, the vector field $W\in\mathcal{X}(M)$ with flow $\psi_t$. Then the Hamiltonian vector field $X_{\mathcal{K}}$ on $T^*M$ is invariant under the flow $\psi_{t,*}$ of $X_{W^*}$ if and only if $W$ is an $F$-Killing field, where $\mathcal{K}$ is the Legendre dual of $F$.
\end{Lemma}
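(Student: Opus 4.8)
The plan is to reduce this geometric invariance statement to the purely algebraic Killing criterion already recorded above the lemma, namely that $W$ is $F$-Killing if and only if $\{\mathcal{K},W^*\}=0$ for the canonical Poisson bracket $\{\cdot,\cdot\}$ on $T^*M$. The bridge between the two sides is the standard symplectic dictionary relating the flow of a Hamiltonian vector field, the Lie derivative, and the Poisson bracket, so the entire argument is a chain of equivalences anchored at that criterion.

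First I would unwind what invariance means. Write $\Phi_t$ for the flow of the Hamiltonian vector field $X_{W^*}$ on $T^*M$ (this is the map denoted $\psi_{t,*}$ in the statement, which is nothing but the cotangent lift of the flow $\psi_t$ of $W$). Saying that $X_{\mathcal{K}}$ is invariant under this flow is, by definition, $(\Phi_t)_{*}X_{\mathcal{K}}=X_{\mathcal{K}}$ for all $t$. Differentiating at $t=0$ gives the infinitesimal version $\mathcal{L}_{X_{W^*}}X_{\mathcal{K}}=[X_{W^*},X_{\mathcal{K}}]=0$; conversely, if the bracket vanishes then $\tfrac{d}{dt}(\Phi_{-t})_{*}X_{\mathcal{K}}=(\Phi_{-t})_{*}[X_{W^*},X_{\mathcal{K}}]=0$, so the pushed-forward field is independent of $t$ and stays equal to $X_{\mathcal{K}}$. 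Hence flow-invariance is \emph{precisely} the condition $[X_{W^*},X_{\mathcal{K}}]=0$.

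The next step is the key algebraic identity. With the sign convention for $X_f$ fixed in the text and the induced bracket $\{f,g\}=X_f(g)$, a short coordinate computation gives the Lie-algebra morphism property $[X_f,X_g]=X_{\{f,g\}}$. Applying this with $f=W^*$ and $g=\mathcal{K}$ converts the bracket condition into $X_{\{W^*,\mathcal{K}\}}=0$, that is, the Hamiltonian vector field of the function $\{W^*,\mathcal{K}\}$ vanishes; since $X_g=0$ forces $dg=0$, this says exactly that $\{W^*,\mathcal{K}\}$ is locally constant on the connected slit bundle $T^*M\setminus\{0\}$. To promote ``locally constant'' to ``identically zero,'' I would use homogeneity: $W^*=W^i p_i$ is positively homogeneous of degree one in $p$, and the Legendre dual $\mathcal{K}$ of a Finsler metric is likewise positively homogeneous of degree one in $p$, so $\{W^*,\mathcal{K}\}$ is homogeneous of degree one in $p$; a degree-one homogeneous function that is constant must be the zero constant. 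Thus $[X_{W^*},X_{\mathcal{K}}]=0$ is equivalent to $\{W^*,\mathcal{K}\}=0$, and by antisymmetry $\{W^*,\mathcal{K}\}=-\{\mathcal{K},W^*\}$ this is exactly the Killing criterion $\{\mathcal{K},W^*\}=0$, which closes both implications.

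The main obstacle I anticipate is bookkeeping rather than conceptual: pinning down the correct sign in $[X_f,X_g]=\pm X_{\{f,g\}}$ for the specific Hamiltonian convention used here (although the sign is immaterial for the vanishing conclusion), and justifying the passage from locally constant to zero, where the degree-one homogeneity of $\{W^*,\mathcal{K}\}$ together with connectedness of $T^*M\setminus\{0\}$ does the work. Everything else is the standard symplectic dictionary applied to the already-established characterization of $F$-Killing fields.
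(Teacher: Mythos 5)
Your proof is correct and follows essentially the same route as the paper's: flow-invariance of $X_{\mathcal{K}}$ is converted to $[X_{W^*},X_{\mathcal{K}}]=0$, then via $[X_f,X_g]=X_{\{f,g\}}$ to $X_{\{W^*,\mathcal{K}\}}=0$, and finally to the Poisson-bracket Killing criterion $\{\mathcal{K},W^*\}=0$ recorded just before the lemma. The one difference is care rather than method: the paper jumps directly from $X_{\{W^*,\mathcal{K}\}}=0$ to the Killing condition, whereas you correctly observe that this vanishing a priori only makes $\{W^*,\mathcal{K}\}$ locally constant, and you close that gap using the degree-one positive homogeneity of $W^*$ and $\mathcal{K}$ in $p$ (constant along rays forces the value zero) --- a step the paper's proof leaves implicit.
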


\begin{proof}
Indeed, the invariance condition $\psi_{t,*}(X_{\cK})=X_{\cK}$ is equivalent to $\cL_{X_{W^*}}X_{\cK}=0$ by definition, hence $[X_{W^*},X_{\cK}]=0$, i.e. $X_{\{W^*,\cK\}}=0$. This shows that $W$ is actually $F$-Killing field.

$\qedd$
\end{proof}

\begin{Lemma}\label{lem_new_1}
Let $(M,F)$ be a Randers metric and $W\in TM$ a vector field on $M$. Then
\begin{enumerate}[(i)]
\item The navigation data of $\widetilde{F}$ is $(h,V+W)$, where $(h,V)$ is the navigation data of $F=\alpha+\beta$, and $\widetilde{F}$ is the solution of Zermelo's navigation problem for $(F,W)$.
\item The Randers metric $\widetilde{F}=\widetilde{\alpha}+\widetilde{\beta}$ is positive defined if and only if $F(-W)<1$.
\end{enumerate}
\end{Lemma}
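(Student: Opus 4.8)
The statement combines two facts about the Zermelo navigation construction, and my strategy is to reduce both to the composition of navigation data already encoded in Lemma \ref{lem_A1}. The central computation is to understand how Legendre duals add under rigid translation, and then to read off the navigation data of $\widetilde{F}$ from its Hamiltonian. I would prove part (ii) first, since positive-definiteness is needed to even guarantee $\widetilde{F}$ is a genuine Randers metric, and then establish the additivity of navigation vectors in part (i).

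For part (ii), I would argue directly from the geometric meaning of the rigid translation. The indicatrix $\widetilde{\Sigma}(x)$ is obtained by translating $\Sigma_F(x)$ by $W(x)$, and $\widetilde{F}_x(y)=1$ precisely on this translated set. The metric $\widetilde{F}$ is positive-definite (i.e.\ the translated indicatrix still strongly convexly bounds a region containing the origin, so that $\widetilde{F}(y)>0$ for $y\neq 0$) exactly when the origin $0\in T_xM$ lies strictly inside $\widetilde{\Sigma}(x)$. Since $0\in\widetilde{\Sigma}(x)$-interior iff $-W(x)$ lies strictly inside $\Sigma_F(x)$, this is equivalent to $F(x,-W)<1$. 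I would make this precise using the standard characterization that a Randers metric arising from navigation data $(h,V)$ is positive-definite iff $\|V\|_h<1$, combined with the explicit description of the new navigation data from part (i).

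For part (i), the key step is Lemma \ref{lem_A1}: the Legendre dual of $\widetilde{F}$ is $\widetilde{\cK}=\cK+W^*$, where $\cK$ is the dual of $F$. Writing $F=\alpha+\beta$ with navigation data $(h,V)$, the hypothesis means $F$ itself is the solution of Zermelo's problem for $(h,V)$. Applying Lemma \ref{lem_A1} once with base metric $h$ (general navigation data $(h,V)$) gives $\cK=\cK_h+V^*$, where $\cK_h$ is the dual of the Riemannian metric $h$. Applying it again with base $F$ and translation $W$ gives $\widetilde{\cK}=\cK+W^*=\cK_h+V^*+W^*=\cK_h+(V+W)^*$, where I use the elementary linearity $V^*+W^*=(V+W)^*$ of the lift $X\mapsto X^i(x)p_i$. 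Since $\cK_h+(V+W)^*$ is by Lemma \ref{lem_A1} precisely the Legendre dual of the solution of the navigation problem with data $(h,V+W)$, and the Legendre dual determines the Finsler metric uniquely, I conclude that $\widetilde{F}$ has navigation data $(h,V+W)$.

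The main obstacle I anticipate is not conceptual but bookkeeping: one must justify that the additivity of Hamiltonians transfers cleanly to additivity of navigation vector fields, which rests on the bijectivity between a Randers metric and its navigation data (via the dictionary $h^2=\varepsilon(\alpha^2-\beta^2)$, $V=-\varepsilon^{-1}\beta^{\#}$ recorded above). I would verify that the map $(h,V)\mapsto F$ is injective on the relevant domain $\|V\|_h<1$, so that matching Hamiltonians forces matching navigation data. Once this uniqueness is in hand, part (i) is immediate from the twice-applied Lemma \ref{lem_A1}, and part (ii) follows by translating the condition $\|V+W\|_h<1$ back into the Finslerian inequality $F(-W)<1$ via the same dictionary.
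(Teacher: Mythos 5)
Your proposal is correct, but it takes a genuinely different route from the paper on both parts. For (i), the paper's entire proof is the one-line composition of rigid translations at the level of indicatrices: $\Sigma_{\widetilde{F}}(x)=\Sigma_F(x)+W(x)=\Sigma_h(x)+V(x)+W(x)$, hence the navigation data is $(h,V+W)$. You instead apply Lemma \ref{lem_A1} twice and appeal to injectivity of the Legendre correspondence; this works, but it is heavier machinery, and note two points of care: your final invocation of Lemma \ref{lem_A1} for the data $(h,V+W)$ presupposes that this problem has a positive-definite solution, i.e.\ $\|V+W\|_h<1$, which is most directly supplied by the very translation identity the paper uses (so the set-level composition cannot really be bypassed), and your stated order --- prove (ii) first, yet make (ii) precise ``using part (i)'' --- needs untangling, since the convexity argument for (ii) is in fact self-contained once you apply it twice (origin interior to $\widetilde{\Sigma}$ iff $-W$ interior to $\Sigma_F$ iff $F(-W)<1$, and equally iff $-(V+W)$ interior to $\Sigma_h$ iff $\|V+W\|_h<1$). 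For (ii), the paper proceeds by explicit computation: it expresses $\alpha^2(-W)=\tfrac{1}{\lambda}\|W\|_h^2+\tfrac{1}{\lambda^2}\langle V,W\rangle_h^2$ and $\beta(-W)=\tfrac{1}{\lambda}\langle V,W\rangle_h$ via the Zermelo dictionary and manipulates the inequality $F(-W)<1$ step by step into $\|V+W\|_h<1$; your convex-geometric argument replaces this computation entirely. What each approach buys: the paper's algebra produces explicit formulas in the same spirit as (and partly reused in) Lemma \ref{lem_new_2} and the remark following it, where the equivalence $F(-W)<1\Leftrightarrow\|\widetilde{b}\|_{\widetilde{\alpha}}<1$ is re-derived through $\sigma=\varepsilon\eta$; your route exposes the structural reason for the equivalence and integrates naturally with the Hamiltonian formalism of Lemmas \ref{lem_A1}, \ref{lem_A2} and \ref{lem: X}, at the cost of having to justify the standard facts that a metric is determined by its Hamiltonian and that positive-definiteness is equivalent to the origin lying strictly inside the (strongly convex) indicatrix.
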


\begin{proof}
\begin{enumerate}[(i)]
\item Recall that (see \cite{R}, \cite{BRS}) the indicatrix of $F$ is obtained by a rigid translation of the $h$-unit sphere $\sum_h(x)$ by $V$, i.e. for any $x\in M$
$$
\textstyle{\sum_F(x)=\sum_h(x)+V(x)},
$$
where $\sum_F(x)=\{y\in T_xM:\ F(x,y)=1\}$, $\sum_h(x)=\{y\in T_xM,\ \|y\|_h=1\}$, and $\|V\|_h<1$.
Then, if $\widetilde{F}$ is the solution of the Zermelo's navigation problem for $(F,W)$, we have
$$
\textstyle{\sum_{\widetilde{F}}(x)=\sum_F(x)+W(x)=\sum_h(x)+V(x)+W(x)},
$$
i.e., navigation data of $\widetilde{F}$ is $(h,V+W)$.

\item If we use (i), then $\widetilde{F}$ is positive defined Randers metric if and only if $\|V+W\|_h<1$.
Observe that
$$
\alpha^2(-W)=\alpha^2(W)=a_{ij}W^iW^j=\frac{1}{\lambda}h_{ij}W^iW^j+\left(\frac{V_i}{\lambda}W^i\right)^2=\frac{1}{\lambda}\|W\|_h^2+\frac{1}{\lambda^2}\langle V,W\rangle^2_h,
$$
where $\lambda=1-\|V\|^2_h>0$, and
$$
\beta(-W)=-\beta(W)=-b_iW^i=\frac{V_i}{\lambda}W^i=\frac{1}{\lambda}\langle V,W\rangle_h.
$$
It follows that 
$$
F(-W)=\sqrt{\frac{1}{\lambda}\|W\|_h^2+\frac{1}{\lambda^2}\langle V,W\rangle^2_h}+\frac{1}{\lambda}\langle V,W\rangle_h,
$$
hence $F(-W)<1$ is equivalent to
$$
\sqrt{\lambda\|W\|_h^2+\langle V,W\rangle_h^2}+\langle V,W\rangle_h<\lambda,
$$
where we use  $\lambda>0$ due to the fact that $F$ is positive defined Randers metric. Therefore, we successively obtain
\begin{equation*}
\begin{split}
\lambda\|W\|_h^2+\langle V,W\rangle_h^2&<\{\lambda-\langle V,W\rangle_h\}^2,\\
\lambda\|W\|_h^2+\langle V,W\rangle_h^2&<\lambda^2-2\lambda\langle V,W\rangle_h+\langle V,W\rangle_h^2,\\
\lambda\|W\|_h^2&<\lambda^2-2\lambda\langle V,W\rangle_h,\\
\|W\|_h^2&<\lambda-2\langle V,W\rangle_h,\\
\|W\|_h^2&<1-\|V\|_h^2-2\langle V,W\rangle_h,
\end{split}
\end{equation*}
which is equivalent to $\|V+W\|<1$, hence $\widetilde{F}$ is positive defined.
The converse implication is trivial.

\end{enumerate}

$\qedd$
\end{proof}

\begin{Lemma}\label{lem_new_2}
If $\widetilde{F}=\widetilde{\alpha}+\widetilde{\beta}$ is the Randers metric obtained in Lemma \ref{lem_new_1}, then we have
\begin{equation*}
\begin{split}
\widetilde{\alpha}^2&=\frac{1}{\eta}(\alpha^2-\beta^2)+\langle \frac{\widetilde{W}}{\eta},y\rangle_\alpha,\\
\widetilde{\beta}&=-\langle\frac{\widetilde{W}}{\eta},y\rangle,
\end{split}
\end{equation*}
where
\begin{equation*}
\begin{split}
\eta &:=[1+F(W)][1-F(-W)],\\
\widetilde{W}_i &:=W_i-b_i[1+\beta(W)],\ \textrm {and } W_i=a_{ij}W^j.
\end{split}
\end{equation*}
\end{Lemma}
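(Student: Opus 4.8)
The plan is to invoke Lemma~\ref{lem_new_1}(i), which already identifies the navigation data of $\widetilde{F}$ as $(h,V+W)$, and then to substitute this into the explicit navigation-to-Randers formula recalled at the start of the section. Setting $\widetilde{\lambda}:=1-\|V+W\|_h^2$, that formula gives at once
\[
\widetilde{\alpha}^2=\frac{\widetilde{\lambda}\,\|y\|_h^2+h(y,V+W)^2}{\widetilde{\lambda}^2},\qquad
\widetilde{\beta}=-\frac{h(y,V+W)}{\widetilde{\lambda}}.
\]
The whole task is then to rewrite these right-hand sides, which are phrased through $h$ and $V$, in terms of the data $\alpha,\beta,W$ of the original metric.

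First I would set up the dictionary between the two descriptions of $F=\alpha+\beta$. From the navigation formula one reads off $\alpha^2-\beta^2=\|y\|_h^2/\lambda$ with $\lambda=1-\|V\|_h^2$, hence $\|y\|_h^2=\lambda(\alpha^2-\beta^2)$; and inverting $a_{ij}=(\lambda h_{ij}+V_iV_j)/\lambda^2$, $b_i=-V_i/\lambda$ gives $h_{ij}=\lambda(a_{ij}-b_ib_j)$. These two identities let me compute the linear ingredient: $h(y,V)=-\lambda\beta$ and $h(y,W)=\lambda\big[\langle W,y\rangle_\alpha-\beta\,\beta(W)\big]$, so that
\[
h(y,V+W)=\lambda\big[\langle W,y\rangle_\alpha-\beta(1+\beta(W))\big]=\lambda\,\widetilde{W}_iy^i,
\]
where $\widetilde{W}_i=W_i-b_i(1+\beta(W))$ is exactly the one-form in the statement.

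The crucial step is to identify the scalar $\eta$ with the ratio $\widetilde{\lambda}/\lambda$. Expanding $\|V+W\|_h^2$ yields $\widetilde{\lambda}=\lambda-2\langle V,W\rangle_h-\|W\|_h^2$, whence $\widetilde{\lambda}/\lambda=1-\tfrac{1}{\lambda}(2\langle V,W\rangle_h+\|W\|_h^2)$. On the other hand, the expressions $F(\pm W)=\alpha(W)\mp\tfrac{1}{\lambda}\langle V,W\rangle_h$, with $\alpha^2(W)=\tfrac{1}{\lambda}\|W\|_h^2+\tfrac{1}{\lambda^2}\langle V,W\rangle_h^2$, are already computed in the proof of Lemma~\ref{lem_new_1}; substituting them into $\eta=[1+F(W)][1-F(-W)]$ makes the cross terms collapse and produces precisely $1-\tfrac{1}{\lambda}(2\langle V,W\rangle_h+\|W\|_h^2)$. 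Thus $\widetilde{\lambda}=\eta\lambda$. I expect this matching to be the main obstacle, since it is the only point where the geometrically defined $\eta$ must be reconciled with the navigation scalar; the rest is bookkeeping, and positivity $\eta>0$ follows from $F(-W)<1$ by Lemma~\ref{lem_new_1}(ii).

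Finally I would substitute $\|y\|_h^2=\lambda(\alpha^2-\beta^2)$, $h(y,V+W)=\lambda\,\widetilde{W}_iy^i$ and $\widetilde{\lambda}=\eta\lambda$ into the two displayed formulas. All powers of $\lambda$ cancel, leaving $\widetilde{\beta}=-\tfrac{1}{\eta}\widetilde{W}_iy^i=-\langle\tfrac{\widetilde{W}}{\eta},y\rangle$ and $\widetilde{\alpha}^2=\tfrac{1}{\eta}(\alpha^2-\beta^2)+\tfrac{1}{\eta^2}(\widetilde{W}_iy^i)^2=\tfrac{1}{\eta}(\alpha^2-\beta^2)+\widetilde{\beta}^2$, which is the asserted expression, the quadratic term $\langle\widetilde{W}/\eta,y\rangle^2$ being $\widetilde{\beta}^2$.
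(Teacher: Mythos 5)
Your proof is correct and follows essentially the same route as the paper's: both invoke Lemma \ref{lem_new_1}(i) to get the navigation data $(h,V+W)$, establish the two key identities $1-\|V+W\|_h^2=\eta\lambda$ (the paper's $\sigma=\varepsilon\eta$) and $h(y,V+W)=\lambda\,\widetilde{W}_iy^i$ (the paper's $U_i=\varepsilon\widetilde{W}_i$), and then substitute into the Zermelo navigation formulas. The only difference is cosmetic: the paper performs the computation on the tensor components $\widetilde{a}_{ij},\widetilde{b}_i$, while you work with the scalar quantities $\widetilde{\alpha}^2,\widetilde{\beta}$ evaluated on $y$, reading the same algebra in the opposite direction.
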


\begin{proof}
Since the Zermelo's navigation data for $\widetilde{F}$ is $(h,U:=V+W)$, as shown in Lemma \ref{lem_new_1}, it follows (see \cite{R}, \cite{BR}) 
\begin{equation}\label{eq_proof_main_2_3_0}
\widetilde{a}_{ij}=\frac{1}{\sigma}h_{ij}+\frac{U_i}{\sigma}\frac{U_j}{\sigma},\quad 
\widetilde{b}_i=-\frac{U_i}{\sigma},
\end{equation}
where
\begin{equation*}
U_i=h_{ij}U^j=h_{ij}(V^i+W^i),\quad
\sigma :=1-\|V+W\|_h^2.
\end{equation*}

Recall that the navigation data $(h,V)$ of a Randers metric $F=\alpha+\beta$ can be computed by
\begin{equation*}
h_{ij}=\varepsilon(a_{ij}-b_ib_j),\quad
V^i=-\frac{b^i}{\varepsilon},
\end{equation*}
where $\varepsilon:=1-\|b\|_\alpha^2$, $b^i=a^{ij}b_j$ (see \cite{BR}, p. 233). Observe that as value $\varepsilon=1-\|b\|_\alpha^2=1-\|V\|_h^2=\lambda$.

We have
\begin{equation*}
\begin{split}
\langle V,W\rangle_h &= h_{ij}V^iW^j=\varepsilon(a_{ij}-b_ib_j)\left(-\frac{b^i}{\varepsilon}\right)W^j\\
&=-(a_{ij}b^iW^j-b_ib^ib_jW^j)-(b_jW^j-\|b\|_\alpha^2b_jW^j)\\
&=-(\beta(W)-\|b\|_\alpha^2\beta(W))=-\varepsilon\beta(W),
\end{split}
\end{equation*}
i.e.,
$$
\langle V,W\rangle_h=-\varepsilon\beta(W)
$$
and
$$
\|W\|_h^2=\varepsilon(a_{ij}-b_ib_j)W^iW^j=\varepsilon\{\alpha^2(W)-\beta^2(W)\}.
$$

It results
\begin{equation*}
\begin{split}
\sigma&=1-\|U\|_h^2=1-\|V\|_h^2-2\langle V,W\rangle_h-\|W\|_h^2\\
&=\varepsilon+2\varepsilon\beta(W)-\varepsilon\{\alpha^2(W)-\beta^2(W)\}\\
&=\varepsilon\{1+2\beta(W)+\beta^2(W)-\alpha^2(W)\}\\
&=\varepsilon\{[1+\beta(W)]^2-\alpha^2(W)\}\\
&=\varepsilon[1+\beta(W)+\alpha(W)][1+\beta(W)-\alpha(W)]\\
&=\varepsilon[1+F(W)][1-F(-W)],
\end{split}
\end{equation*}
i.e.,\newpage
\begin{equation}\label{eq_new_3.4}
\sigma=\varepsilon\eta,
\end{equation}
where $\eta=[1+F(W)][1-F(-W)]$.

Moreover, we have
\begin{equation*}
\begin{split}
U_i&=h_{ij}U^j=h_{ij}(V^j+W^j)=\varepsilon(a_{ij}-b_ib_j)V^j+\varepsilon(a_{ij}-b_ib_j)W^j\\
&=\varepsilon(a_{ij}-b_ib_j)\left(-\frac{b^j}{\varepsilon}\right)+\varepsilon(a_{ij}-b_ib_j)W^j\\
&=-[b_i-b_i\|b\|_\alpha^2]+\varepsilon[W_i-b_i\beta(W)]\\
&=-\varepsilon b_i+\varepsilon[W_i-b_i\beta(W)]\\
&=\varepsilon\{W_i-b_i[1+\beta(W)]\}=\varepsilon\widetilde{W}_i,
\end{split}
\end{equation*}
i.e., $U=\varepsilon \widetilde{W}$.

With these results, we compute
\begin{equation*}
\begin{split}
\widetilde{a}_{ij}&=\frac{1}{\sigma}h_{ij}+\frac{U_i}{\sigma}\frac{U_j}{\sigma}\\
&=\frac{1}{\varepsilon\eta}\varepsilon(a_{ij}-b_ib_j)+\frac{\varepsilon\widetilde{W}_i}{\varepsilon\eta}\frac{\varepsilon\widetilde{W}_j}{\varepsilon\eta}\\
&=\frac{1}{\eta}(a_{ij}-b_ib_j)+\frac{\widetilde{W}_i}{\eta}\frac{\widetilde{W}_j}{\eta}
\end{split}
\end{equation*}
and
$$
\widetilde{b}_i=-\frac{U_i}{\sigma}=-\frac{\varepsilon\widetilde{W}_i}{\varepsilon\eta}=-\frac{\widetilde{W}_i}{\eta},
$$
hence the conclusion follows.

$\qedd$
\end{proof}

\begin{Remark}
We observe that $\widetilde{F}=\widetilde{\alpha}+\widetilde{\beta}$ is positive defined if and only if $\|\widetilde{b}\|_{\widetilde{\alpha}}<1$, i.e., $\sigma=1-\|U\|_h^2=1-\|\widetilde{b}\|_{\widetilde{\alpha}}^2>0$.

On the other hand, \eqref{eq_new_3.4} implies that
$$
\sigma>0\ \Leftrightarrow\ \varepsilon[1+F(W)][1-F(-W)]>0\ \Leftrightarrow\ 1-F(-W)>0,
$$
since $\varepsilon>0$ due to the fact that $F$ is assumed positive defined and $F(W)>0$.

In other words, we have shown that
$$
F(-W)<1\ \Leftrightarrow\ \|\widetilde{b}\|_{\widetilde{\alpha}}<1,
$$
that is another proof and more a intuitive explanation of positive definiteness condition $F(-W)<1$ (compare to \cite{FM}). 
\end{Remark}

We will show a generic result on geodesics, conjugate and cut loci of a Randers metric.

\begin{Lemma}\label{lem: X}
Let $(M,F=\alpha+\beta)$ be a not flat Randers metric, let  $W\in\mathcal{X}(M)$ be a vector field on $M$ such that $F(-W)<1$ and let $\widetilde{F}=\widetilde{\alpha}+\widetilde{\beta}$ be the solution of navigation problem for $(F,W)$. If $W$ is $F$-Killing field, then
\begin{itemize}
\item[(i)] the $\widetilde{F}$-unit speed geodesics $\widetilde{\cP}$ are given by
$$
\widetilde{\cP}(t)=\psi_t(\cP(t)),
$$
where $\cP$ is an $F$-unit speed geodesic and $\psi_t$ is the flow of $W$;
\item[(ii)] the point $\widetilde{\cP}(l)$ is conjugate to $q=\widetilde{\cP}(0)$ along the $\widetilde{F}$-geodesic $\widetilde{\cP}:[0,l]\to M$ if and only if the point $\cP(l)$ is conjugate to $q=\cP(0)$ along the corresponding $F$-geodesic $\cP(t)=\psi_{-t}(\widetilde{\cP}(t))$, for $t\in[0,l]$;
\item[(iii)] the point $\hat{p}$ is an $\widetilde{F}$-cut point of $q$ if and only if $p=\psi_{-l}(\hat{p})$ is an $F$-cut point of $q$, 
\end{itemize}
where $l=d_{\widetilde{F}}(q,\hat{p})$.
\end{Lemma}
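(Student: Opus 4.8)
The plan is to push the whole argument to the cotangent bundle, where the two navigation data differ only by the fibrewise-linear term $W^*$. By Lemma \ref{lem_A1} the Legendre duals satisfy $\widetilde{\cK}=\cK+W^*$, and since $f\mapsto X_f$ is linear, $X_{\widetilde{\cK}}=X_{\cK}+X_{W^*}$. Because $W$ is $F$-Killing, Lemma \ref{lem_A2} gives $\{W^*,\cK\}=0$, i.e. $[X_{W^*},X_{\cK}]=0$, so the two Hamiltonian flows commute and the flow of their sum factors as $\widetilde{\phi}_t=\phi^{W^*}_t\circ\phi^{\cK}_t=\phi^{\cK}_t\circ\phi^{W^*}_t$. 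The flow $\phi^{W^*}_t$ is the cotangent lift of $\psi_t$, hence $\pi\circ\phi^{W^*}_t=\psi_t\circ\pi$ with $\pi:T^*M\to M$. Projecting an $\widetilde{F}$-cogeodesic and using that $\pi\circ\phi^{\cK}_t$ traces a (unit-speed) $F$-geodesic $\cP$ then yields $\widetilde{\cP}(t)=\psi_t(\cP(t))$, which is (i). To check that the parameter is $\widetilde{F}$-arclength I would differentiate directly: $\widetilde{\cP}'(t)=(\psi_t)_*\cP'(t)+W(\widetilde{\cP}(t))$; since $\psi_t$ is an $F$-isometry the vector $u:=(\psi_t)_*\cP'(t)$ satisfies $F(u)=1$, and by the very definition of the navigation solution $\widetilde{F}(u+W)=1$.

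For (ii) I would transfer geodesic variations. A smooth family of $\widetilde{F}$-geodesics $\widetilde{\cP}_s$ corresponds by (i) to a family of $F$-geodesics $\cP_s$ with $\widetilde{\cP}_s(t)=\psi_t(\cP_s(t))$. Differentiating in $s$ at $s=0$ produces the Jacobi fields $\widetilde{J}(t)=(\psi_t)_*J(t)$. As $(\psi_t)_*$ is a linear isomorphism of tangent spaces, $\widetilde{J}$ vanishes at $0$ and at $l$ precisely when $J$ does; hence $\widetilde{\cP}(l)$ is conjugate to $q$ along $\widetilde{\cP}$ iff $\cP(l)$ is conjugate to $q$ along $\cP$.

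For (iii) the starting point is a length-preserving bijection between unit-speed curves issuing from $q$: if $c$ is $F$-unit-speed then $\widetilde{c}(t):=\psi_t(c(t))$ is $\widetilde{F}$-unit-speed on the same interval (same computation as in (i)), with inverse $c(t)=\psi_{-t}(\widetilde{c}(t))$. Thus an $\widetilde{F}$-curve from $q$ to $\hat{p}$ of length $L$ corresponds to an $F$-curve from $q$ to $\psi_{-L}(\hat{p})$ of length $L$, so the achievable lengths are exactly $\{L:\ d_F(q,\psi_{-L}(\hat{p}))\le L\}$. I would then study $g(L):=d_F(q,\psi_{-L}(\hat{p}))-L$: its upper Dini derivative is at most $F(-W)-1<0$, since moving along $-W$ changes the $F$-distance at rate at most $F(-W)$, so $g$ is strictly decreasing and vanishes at a unique $L^{*}$. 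This identifies $l=d_{\widetilde{F}}(q,\hat{p})=L^{*}$ and, crucially, gives $d_F(q,\psi_{-l}(\hat{p}))=l$, tying the distance functions by $d_{\widetilde{F}}(q,x)=L\iff d_F(q,\psi_{-L}(x))=L$. Writing $\rho=d_F(q,\cdot)$ and $\widetilde{\rho}=d_{\widetilde{F}}(q,\cdot)$, the relation $\rho(\psi_{-\widetilde{\rho}(x)}(x))=\widetilde{\rho}(x)$ has $\partial_L$-derivative bounded away from $0$, so by the implicit function theorem $\widetilde{\rho}$ is smooth at $\hat{p}$ wherever $\rho$ is smooth at $p=\psi_{-l}(\hat{p})$. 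The reverse implication follows symmetrically, once one observes that $F$ is the navigation solution of $(\widetilde{F},-W)$ with $\widetilde{F}(W)<1$ and that $-W$ is $\widetilde{F}$-Killing. Since the cut points of $q$ are exactly the non-smooth points of the distance function, this yields $\hat{p}\in\Cut_{\widetilde{F}}(q)\iff p\in\Cut_F(q)$.

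The main obstacle is precisely this last part: the endpoint correspondence $\hat{p}\leftrightarrow\psi_{-L}(\hat{p})$ depends on the length $L$ of the competing curve, so one cannot directly compare $d_{\widetilde{F}}(q,\cdot)$ and $d_F(q,\cdot)$ at a fixed target. The strict monotonicity of $g$, which is exactly where the positive-definiteness hypothesis $F(-W)<1$ enters, is what pins down the cut distance and lets the implicit function theorem carry the singular set of the distance function from one metric to the other.
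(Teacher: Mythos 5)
Your argument is correct. For (i) and (ii) it is essentially the paper's own proof: the paper also works on $T^*M$, differentiating $\psi_t(\cP(t))$ and using Lemma \ref{lem_A1} ($\widetilde{\cK}=\cK+W^*$) together with Lemma \ref{lem_A2} ($\psi_{t,*}X_{\cK}=X_{\cK}$) to recognize the result as an integral curve of $X_{\widetilde{\cK}}$ --- which is just the infinitesimal form of your flow factorization $\widetilde{\phi}_t=\phi^{W^*}_t\circ\phi^{\cK}_t$ --- and it proves (ii) by the same push-forward of geodesic variations, $\widetilde{J}=(\psi_t)_*J$. Where you genuinely depart from the paper is (iii): there the paper only remarks that the arclength parameters coincide and then asserts, ``as in the Riemannian case,'' that the points where $d_F(q,\cdot)$ and $d_{\widetilde{F}}(q,\cdot)$ lose differentiability correspond under the flow, citing \cite{ST2016} for the characterization of cut points; no mechanism for that correspondence is given. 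You supply one: the length-preserving bijection between unit-speed curves gives $d_{\widetilde{F}}(q,\hat{p})=\inf\{L:\ d_F(q,\psi_{-L}(\hat{p}))\le L\}$; the strict decrease of $g(L)=d_F(q,\psi_{-L}(\hat{p}))-L$ (the one place where $F(-W)<1$ enters quantitatively) pins down the unique zero $l$ and yields $d_F(q,\psi_{-l}(\hat{p}))=l$; and the implicit function theorem then transports smoothness of the distance function in one direction, the converse following from the symmetric data $(\widetilde{F},-W)$. Your route buys an explicit, provable relation between the two distance functions where the paper relies on analogy, at the cost of a few routine verifications you partly flag: that $\{L:\ g(L)\le 0\}$ is nonempty (any $\widetilde{F}$-rectifiable curve from $q$ to $\hat{p}$ produces a point of it, so the unique zero exists); that $W$, and hence $-W$, is automatically $\widetilde{F}$-Killing, because $(\psi_t)_*W=W$ and $(\psi_t)_*$ maps the indicatrix field $\Sigma_F$ to itself, so it preserves $\Sigma_F+W=\Sigma_{\widetilde{F}}$; and that the Dini-derivative bound $F(-W)-1<0$ is uniform on compact parameter intervals, which continuity provides. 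All of these are easy, so your proof stands as a rigorous filling-in of the paper's sketch.
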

\begin{proof}
We will prove (i).

For simplicity, if we also denote by $\psi_t:T^*M\to T^*M$ the flow of $X_{W^*}$, then for a curve $\cP(t)$ on $T^*M$ we denote
$$
\hat{\cP}(t)=\psi_t(\cP(t)),
$$
i.e., we map $\cP(t)\mapsto \hat{\cP}(t)$ by the flow $\psi_t$.

By taking the tangent map
$$
(\psi_{t,*})_{\cP(t)}:T_{\cP(t)}(T^*M)\to T_{\hat{\cP}(t)}(T^*M),
$$
we have
$$
X\big|_{\cP(t)}\mapsto (\psi_{t,*})_{\cP(t)}(X\big|_{\cP(t)})=(\psi_{t,*}X)_{\hat{\cP}(t)},
$$
for any vector field $X$ on $T^*M$.

If $\cP(t)$ is an integral curve of the Hamiltonian vector field $X_\cK$, i.e. $\frac{d\cP(t)}{dt}=X_\cK\big|_{\cP(t)}$, where $\cK$ is the Legendre dual of $F$, then the derivative formula of a function of two variables give
\begin{equation*}
\begin{split}
\frac{d}{dt}(\hat{\cP}(t))&=\frac{d}{dt}\psi(t,\cP(t))=X_{W^*}\big|_{\cP(t)}+\psi_{t,*}\left(\frac{d\cP(t)}{dt}\right)\\
&=X_{W^*}\big|_{\hat{\cP}(t)}+\psi_{t,*}\left(X_{\cK}\big|_{\cP(t)}\right)\\
&=X_{W^*}\big|_{\hat{\cP}(t)}+\left(\psi_{t,*}X_\cK\right)_{\hat{\cP}(t)}\\
&=X_{W^*}\big|_{\hat{\cP}(t)}+\left(X_\cK\right)_{\hat{\cP}(t)}\\
&=\left(X_{W^*+\cK}\right)_{\hat{\cP}(t)}=\left(X_{\widetilde{\cK}}\right)_{\hat{\cP}(t)},
\end{split}
\end{equation*}
where we have used that the Legendre dual of 
$\widetilde{F}$ is $\widetilde{\cK}=\cK+W^*$, and $\psi_{t,*}X_\cK=X_\cK$ (see Lemmas \ref{lem_A1} and \ref{lem_A2}),  hence (i) is proved. 

Next, we will prove (ii).

If we denote by $\cP_s:[0,l]\to M$, $-\varepsilon<s<\varepsilon$ a geodesic variation of the $F$-geodesic $\cP$, such that all curves in the variation are $F$-geodesics, then we obtain the variation vector field
$$
J:=\frac{\partial \cP_s}{\partial s}\big|_{s=0},
$$
which clearly is an $F$-Jacobi field.

Taking now into account (i), which shows that
$$
\widetilde{J}=\psi_*(J)
$$
is a Jacobi vector field along $\widetilde{\cP}$, hence the conjugate points along $\cP$ and $\widetilde{\cP}$ correspond each other under the flow $\psi_t$ of $W$, hence (ii) is proved.

Finally, we will prove (iii). From (ii) it is easy to see that since $W$ is $F$-Killing field, the arclength parameter of the $F$-geodesic $\cP$ and of the $\widetilde{F}$-geodesic $\widetilde{\cP}$ coincide.

It can be seen, like in the Riemannian case, that the points where the distance function $d_F(p,\cdot)$ looses its differentiability coinciding by the flow $\psi_t$ to the points where the distance function $d_{\widetilde{F}}(p,\cdot)$ looses its differentiability (see \cite{ST2016}, Theorem A for the characterization of cut points in terms of differentiability of distance function). Hence, (iii) follows.

$\qedd$
\end{proof}

\begin{Lemma}\label{lem: X2}
Let $(M,F=\alpha+\beta)$ be a Randers metric with navigation data $(h,W)$. The followings are equivalent 
\begin{enumerate}[(i)]
\item $d\beta=0$,
\item $dW^\#=d\log\lambda\wedge W^\#$, 
\end{enumerate}
where the one-form $W^\#$ is the $h$-Legendre transformation of $W$ and $\lambda=1-\|W\|^2_h$.
\end{Lemma}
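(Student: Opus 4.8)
The plan is to reduce the equivalence to a single exterior-derivative computation, the key preliminary step being to express the deformation one-form $\beta$ directly in terms of the navigation data $(h,W)$. Using the same conversion formulas between a Randers metric and its navigation data that appear in the proof of Lemma \ref{lem_new_2} (with $W$ playing the role of $U$), one has $b_i=-W_i/\lambda$, where $W_i=h_{ij}W^j$ and $\lambda=1-\|W\|_h^2$. Since $W^\#$ is by definition the $h$-dual one-form $W^\#=W_i\,y^i$, this gives the clean identity $\beta=-\tfrac{1}{\lambda}W^\#$, which is the only structural input needed.

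With this in hand I would simply differentiate. Writing $\beta=-\tfrac{1}{\lambda}W^\#$ and applying the Leibniz rule $d(f\omega)=df\wedge\omega+f\,d\omega$ for a function $f$ and a one-form $\omega$, together with $d(1/\lambda)=-\lambda^{-2}\,d\lambda$, I obtain
\[
d\beta=\frac{1}{\lambda^{2}}\,d\lambda\wedge W^{\#}-\frac{1}{\lambda}\,dW^{\#}.
\]
Now $d\beta=0$ is equivalent, after multiplying through by $\lambda>0$ (positivity of $\lambda$ being guaranteed by the positive definiteness of $F$), to $dW^{\#}=\tfrac{1}{\lambda}\,d\lambda\wedge W^{\#}$. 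Recognizing $\tfrac{1}{\lambda}\,d\lambda=d\log\lambda$ rewrites this as $dW^{\#}=d\log\lambda\wedge W^{\#}$, which is precisely condition (ii). Every step is reversible, so this establishes both implications simultaneously.

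Because the argument collapses to a one-line computation once $\beta$ is written in navigation form, there is no genuine obstacle here; the only point that demands care is the correct identification $\beta=-\lambda^{-1}W^{\#}$ (in particular getting the sign and the factor $\lambda=1-\|W\|_h^2$ right) and the harmless bookkeeping of the sign produced by $d(1/\lambda)$. I would state the conversion formula explicitly and cite the references used in Lemma \ref{lem_new_2} so that the reader can verify that $\beta$ indeed equals $-\tfrac{1}{\lambda}W^{\#}$ for the navigation data $(h,W)$.
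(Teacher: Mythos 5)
Your proof is correct and follows essentially the same route as the paper: both express $\beta=-\tfrac{1}{\lambda}W^{\#}$ from the Zermelo navigation formulas and then obtain the equivalence by a single application of the Leibniz rule, using $\lambda>0$ and $d\log\lambda=\lambda^{-1}d\lambda$ to pass between (i) and (ii).
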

\begin{proof}
Indeed, observe that from the Zermelo's navigation formulas we get (see for instance \eqref{eq_proof_main_2_3_0}, or \cite{R}, \cite{BR}, \cite{BRS}) we get
$$
\beta=-\frac{W_i}{\lambda}dx^i=-\frac{1}{\lambda}W^\#,
$$
where $W^\#=\cL_hW$. Here, $\cL_h$ is the Legendre transform with respect to $h$.

By differentiation, we get
$$
d\beta=-d\left(\frac{1}{\lambda}W^\#\right)=-\left[-\frac{1}{\lambda^2}d\lambda\wedge W^\#+\frac{1}{\lambda}dW^\#\right]=-\frac{1}{\lambda}\left[-d\log\lambda\wedge W^\#+dW^\#\right],
$$
hence the desired equivalence follows. 

$\qedd$
\end{proof}

Summing up, here is our main result.

\begin{theorem}\label{thm_main_1}

Let $(M,h)$ be a Riemannian manifold and let $V_0,V,W\in\mathcal{X}(M)$ be vector fields on $M$. If $\|V_0\|_h<1$, we denote by $F_0=\alpha_0+\beta_0$ the  positive defined Randers metric obtained as solution of the  Zermelo's navigation problem $(h,V_0)$.
\begin{enumerate}[(i)]
\item
	\begin{enumerate}
	\item[(i.1)]
	If $F_0(-V)<1$, then $F_1=\alpha_1+\beta_1$ is a positive defined Randers metric, where $F_1$ is the solution of Zermelo's navigation problem $(F_0,V)$.
	\item[(i.2)] 
	If $F_1(-W)<1$, then $F_2=\alpha_2+\beta_2$ is a positive defined Randers metric, where $F_2$ is the solution of Zermelo's navigation problem $(F_1,W)$.
	\end{enumerate}
\item
	\begin{enumerate}	
	\item[(ii.1)]
	The Randers metric $F_1=\alpha_1+\beta_1$ is the solution of Zermelo's navigation problem  $(h,V_0+V)$.
	\item[(ii.2)]
The Randers metric $F_2=\alpha_2+\beta_2$ is the solution of Zermelo's navigation problem  $(h,V_0+V+W)$.
	\end{enumerate}
\item If the following conditions are satisfied
	\begin{enumerate}
	\item[(C0)]
	$V_0$ is $h$-Killing,
	\item[(C1)]
	$V$ is $F$-Killing,
	\item[(C2)]
	$d(V_0+V+W)^\#=d\log\widetilde{\lambda}\wedge(V_0+V+W)$,
	\end{enumerate}
where $(V_0+V+W)^\#=\mathcal{L}_h(V_0+V+W)$ is the Legendre transformation of $V_0+V+W$ with respect to $h$, and $\widetilde{\lambda}:=1-\|V_0+V+W\|^2_h$,
	then
	\begin{enumerate}
	\item[(iii.1)]
	The $F_0$-unit speed geodesics $\cP_0$, and the $F_1$-unit speed geodesics $\cP_1$ are given by
	\begin{equation}\label{eq_thm_main_1}
	\begin{split}
	\cP_0(t)&=\varphi_t(\rho(t)),\\
	\cP_1(t)&=\psi_t(\cP_0(t))=\psi_t\circ\varphi_t(\rho(t)),
	\end{split}
	\end{equation}
	where $\rho(t)$ is an $h$-unit speed geodesic and $\varphi_t$ and $\psi_t$ are the flows of $V_0$, and $V$, respectively.
	
	The $F_2$-unit speed geodesic $\cP_2(t)$ coincides as points set with $\cP_1(t)$.
	\item[(iii.2)]
	The conjugate points of $q=\cP_2(0)$ along the $F_2$-geodesic $\cP_2$ coincide to the conjugate points of $q=\cP_1(0)$ along the $F_1$-geodesic $\cP_1$, up to parameterization.
	The point $\cP_1(l)$ is conjugate to $q=\cP_1(0)$ along the $F_1$-geodesic $\cP_1:[0,l]\to M$ if and only if the point $\cP_0(l)$ is conjugate to $q=\cP_0(0)$ along the corresponding $F_0$-geodesic $\cP_0(t)=\psi_{-t}(\cP_1(t))$, for $t\in[0,l]$.
	The point $\cP_0(l)$ is conjugate to $q=\cP_0(0)$ along the $F_0$-geodesic $\cP_0:[0,l]\to M$ if and only if the point $\rho(l)$ is conjugate to $q=\rho(0)$ along the corresponding $h$-geodesic $\rho(t)=\varphi_{-t}(\cP_0(t))$, for $t\in[0,l]$, where $\varphi_t$, and $\psi_t$ are the flows of $V_0$, and $V$, respectively.
	\item[(iii.3)]
	The $F_2$-cut locus of $q$ coincide as points set with the $F_1$cut locus of $q$, up to parameterization.
	
	The point $\hat{p}_1$ is an $F_1$-cut point of $q$, if and only if $\hat{p}_0=\psi_{-l}(\hat{p}_1)$ is an $F_1$-cut point of $q$, where $l=d_{F_1}(q,\hat{p}_1)$. The point $\hat{p}_0$ is an $F_0$-cut point of $q$, if and only if $p_0=\varphi_{-l}(\hat{p}_0)$ is an $h$-cut point of $q$, where $l=d_{F_0}(q,\hat{p}_0)$.
	\end{enumerate}

\end{enumerate}

\end{theorem}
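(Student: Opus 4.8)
The plan is to assemble the theorem from the lemmas already in hand, viewing the three metrics as a chain $h\to F_0\to F_1\to F_2$ and transporting the geometric data along its rungs. Parts (i) and (ii) are purely a matter of iterating Lemma~\ref{lem_new_1}. For (i.1) and (ii.1) I would apply that lemma to $F_0$ (whose navigation data is $(h,V_0)$ by construction) and the field $V$: part~(ii) gives that $F_1$ is positive definite exactly when $F_0(-V)<1$, while part~(i) identifies the navigation data of $F_1$ as $(h,V_0+V)$. For (i.2) and (ii.2) I would repeat the argument verbatim with $(F_1,W)$ in place of $(F_0,V)$, now using the freshly computed navigation data $(h,V_0+V)$ of $F_1$; this yields positive definiteness of $F_2$ under $F_1(-W)<1$ and identifies its navigation data as $(h,V_0+V+W)$.

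For part (iii) the two ``Killing'' rungs are handled by Lemma~\ref{lem: X}. Condition (C0) makes $V_0$ an $h$-Killing field, so Lemma~\ref{lem: X} applied to $(h,V_0)$ produces $\cP_0(t)=\varphi_t(\rho(t))$ together with the conjugate- and cut-point correspondences between $F_0$ and $h$ via $\varphi_t$. Condition (C1) makes $V$ an $F_0$-Killing field, so a second application of Lemma~\ref{lem: X} to $(F_0,V)$ gives $\cP_1(t)=\psi_t(\cP_0(t))$ and the analogous correspondences between $F_1$ and $F_0$ via $\psi_t$. Since both flows act by isometries, the common arclength parameter is preserved along each rung (as in the proof of Lemma~\ref{lem: X}(iii)); composing the flows then yields the explicit $\cP_1(t)=\psi_t\circ\varphi_t(\rho(t))$ and the two-step conjugate/cut correspondences back to $h$ recorded in (iii.2)--(iii.3).

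The last rung, relating $F_2$ to $F_1$, is where the argument changes character and is the main obstacle. Here $W$ is \emph{not} assumed $F_1$-Killing, so Lemma~\ref{lem: X} is unavailable; instead I would feed the navigation data $(h,V_0+V+W)$ of $F_2$ into Lemma~\ref{lem: X2}, whose hypothesis is precisely condition (C2), to conclude $d\beta_2=0$. Once $\beta_2$ is closed, $F_2=\alpha_2+\beta_2$ is projectively equivalent to its Riemannian part $\alpha_2$, so the classical result recalled in the Introduction (see~\cite{BCS}) matches the $F_2$-geodesics, conjugate points and cut points with those of $\alpha_2$ as point sets.

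The delicate point, on which I would spend the most care, is twofold. First, projective equivalence preserves geodesics only up to reparameterization, which is exactly the source of the ``up to parameterization'' clauses in (iii.1)--(iii.3) and forces one to track conjugate and cut points as unparameterized loci rather than at matching arclengths; this is the one place where the clean flow-transport of the two Killing rungs breaks down. Second, one must connect the projective class reached through $\alpha_2$ back to the curve family $\cP_1$, transporting the cut locus across this non-isometric correspondence by means of the characterization of cut points through non-differentiability of the distance function (\cite{ST2016}, Theorem~A). Carrying out these identifications and then composing all three rungs gives the full chain $F_2\leftrightarrow F_1\leftrightarrow F_0\leftrightarrow h$ asserted in (iii).
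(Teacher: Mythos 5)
Your handling of parts (i), (ii) and of the two Killing rungs of part (iii) coincides with the paper's proof: Lemma~\ref{lem_new_1} applied to $(F_0,V)$ and then to $(F_1,W)$ gives (i) and (ii), and Lemma~\ref{lem: X} applied under (C0) and (C1) gives the flow-transport formulas \eqref{eq_thm_main_1} and the conjugate/cut correspondences $h\leftrightarrow F_0\leftrightarrow F_1$. The problem is the last rung. As you correctly observe, condition (C2) together with Lemma~\ref{lem: X2} yields $d\beta_2=0$, and the closed-form criterion (\cite{BCS}) then makes $F_2$ projectively equivalent to \emph{its own Riemannian part} $\alpha_2$, so that the $F_2$-geodesics, conjugate points and cut points agree with those of $\alpha_2$. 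But the theorem's claims (iii.1)--(iii.3) compare $F_2$ with $F_1$, and your proposal never supplies the bridge from $\alpha_2$ back to $F_1$: you flag it as ``the delicate point'' and then assert that carrying out ``these identifications'' finishes the proof, with the only named tool being the non-differentiability characterization of cut points from \cite{ST2016}. That tool cannot do this job: $d_{F_1}$ and $d_{\alpha_2}$ are distance functions of two genuinely different metrics, and the hypotheses provide no isometry or flow relating them, since $W$ is not assumed Killing for any of the metrics in play. So the link $F_1\leftrightarrow F_2$ remains unproven in your argument.

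For comparison, the paper's own Step~2 makes exactly this leap without justification: it asserts that $d\beta_2=0$ means ``$F_1$ and $F_2$ are projectively related,'' whereas the cited result only relates $F_2$ to $\alpha_2$. Your version is more honest in isolating the difficulty, but neither argument closes it, and it is doubtful that it can be closed in this generality. Indeed, take $W=-(V_0+V)$, which is admissible because $F_1(-W)<1$ is equivalent to $\|V_0+V+W\|_h<1$ (Lemma~\ref{lem_new_1}), and (C2) holds trivially since $V_0+V+W=0$; then $F_2=h$, and (iii.1) would force the $h$-geodesics to coincide as point sets with the $F_1$-geodesics $\psi_t\circ\varphi_t(\rho(t))$. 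On a two-sphere of revolution with $V_0+V=\mu\,\partial/\partial\theta\neq 0$ this fails by the Clairaut relation: a meridian is an $h$-geodesic, but writing it as a rotated $h$-geodesic would require $m$ constant along the underlying geodesic, which is incompatible with $\|V_0+V\|_h<1$. So the gap you left open is not a routine verification but the actual crux of part (iii); as written, your proof (like the paper's) establishes $h\leftrightarrow F_0\leftrightarrow F_1$ and $F_2\leftrightarrow\alpha_2$, but not the asserted correspondence $F_1\leftrightarrow F_2$.
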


\begin{proof}[Proof of (i), (ii)]
The proof of (i.1), (ii.1) follows immediately from Lemma \ref{lem_new_1} for $(F_0,V)$. Likewise, (i.2) and (ii.2) follows from Lemma \ref{lem_new_1} for $(F_1,W)$.

\noindent {\it Proof of (iii).}
 The proof will be given in two steps. 
 
\textbf{Step 1.} (Properties of $F_0$, $F_1$) With the notations in hypothesis, conditions (C0), (C1) imply that the geodesics, conjugate points and cut points of the Randers metrics  $F_0$, $F_1$ have the properties in (iii) due to Lemma \ref{lem: X}.
 
\textbf{Step 2.} (Properties of  $F_2$)
 By taking into account Lemma \ref{lem: X2} one can see that condition (C2) is actually equivalent to $d\widetilde{\beta}=0$, that is the Randers metrics $F_1=\alpha+\beta$ and $F_2=\widetilde{\alpha}+\widetilde{\beta}$ are projectively related (\cite{BCS}), therefore having the same geodesics as non-parameterized curves, same conjugate points and same cut points. Hence, the desired properties of $F_2$ follows (see Figure \ref{fig_unit_speed}).

\begin{figure}[H]
\begin{center}
\setlength{\unitlength}{1cm}
\begin{tikzpicture}[scale=1]
	\coordinate (O) at (0,0,0);
	\coordinate (A) at (6,2,0);
	\coordinate (B) at (6,-2,0);
	\coordinate (C) at (6,-3,0);
		\draw [->,xshift=1cm,yshift=0.95cm] (0,0) -- (0.1,0.1);
		\draw [->,xshift=1cm,yshift=-0.95cm] (0,0) -- (0.1,-0.1);
		\draw [->,xshift=1cm,yshift=-1.245cm] (0,0) -- (0.1,-0.1);
		\draw[] (O) to [bend left=30] (A);
		\draw[] (O) to [bend right=30] (B);
		\draw[] (O) to [bend right=30] (C);
		\draw[smooth,rotate=-90,dashed,xshift=0.5cm,yshift=2.5cm] plot[domain=-0.85*pi:0.75*pi] (\x,{-sin(\x r)+2});
			\node at (-0.5,0) {$q$};
			
			\draw[->]  (3.8,-2.85) -- (3.3,-2.4);
			\draw[->]  (3.5,-2.1) -- (4,-1.7);
			\draw[->]  (5,2.1) -- (4.5,3);
			
			\draw [->,xshift=5cm,yshift=0.05cm] (0,0) -- (0.1,0.1);
			\draw [->,xshift=3.6cm,yshift=-2.5cm] (0,0) -- (-0.05,0.1);
			
			\node at (7,2.5) {$\{\cP_2(t)\}=\{\cP_1(t)\}$};
			\node at (9,2) {$\cP_1(t)=\psi_t(\cP_0(t))=\psi_t\circ\varphi_t(\rho(t))$};
			\node at (7.5,-2) {$\cP_0(t)=\varphi_t(\rho(t))$};
			\node at (4.5,3.3) {$W$};
			\node at (6.5,-3) {$\rho(t)$};
			\node at (5.5,0) {$\psi_t$};
			\node at (4,-1.5) {$V$};
			\node at (4,-2.5) {$\varphi_t$};
			\node at (3.5,-3) {$V_0$};
			\node at (0,0) {\textbullet};
			\node at (5,2.1) {\textbullet};
			\node at (3.5,-2.1) {\textbullet};
			\node at (3.8,-2.85) {\textbullet};
\end{tikzpicture}
\end{center}
\caption{The unit-speed geodesics $\rho,\rho_0,\rho_1$ and $P_2$ of $h,F_0,F_1$ and $F_2$, respectively.}\label{fig_unit_speed}
\end{figure}
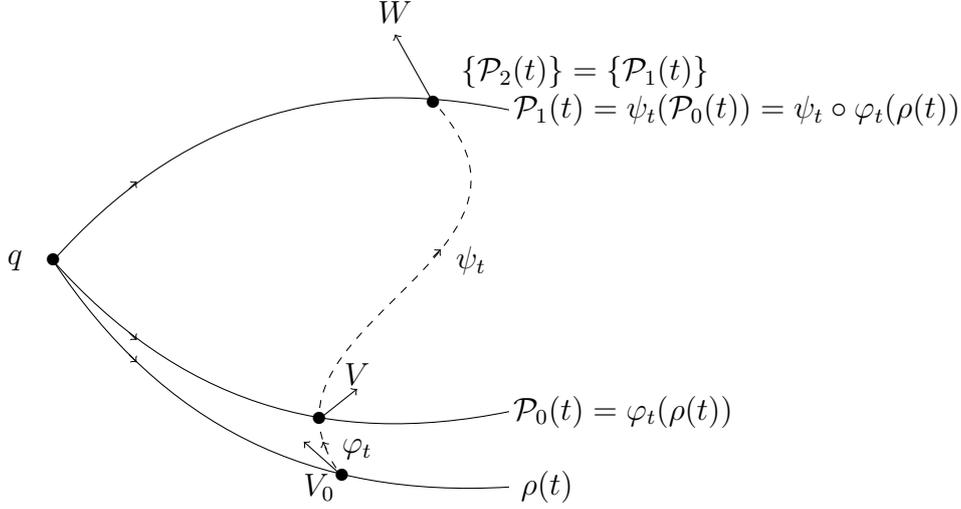

$\qedd$
\end{proof}

\begin{Remark}\label{rem_condition}
Under the hypotheses in the Theorem \ref{thm_main_1} we have
\begin{enumerate}[(i)]
\item conditions (C0), (C1) are equivalent to (C1), (C2)$'$, where 

(C2)$'$: $V$ is $h$-Killing;
\item if we replace conditions (C0), (C1), (C2) with 

(C3): $V_0+V+W$ is $h$-Killing,
\end{enumerate}
then the $F_2$-geodesics, conjugate locus and cut locus is obtained from the $h$-geodesics, conjugate locus and cut locus deformed by the flow of $V_0+V+W$, respectively. Observe that in this case, the $F_2$-geodesics, conjugate locus and cut locus are different from these in Theorem \ref{thm_main_1}.

\end{Remark}

\begin{Remark}\label{rem: sequence}
The construction presented here can now be extended to a sequence of Finsler metrics.

Our sequence construction has two steps. Let $(M,h)$ be a Riemannian two-sphere of revolution, and $V_0,V_1,\ldots,V_{k-1},W_k,\ldots,W_n\in\mathcal{X}(M)$, $n=k-l$, a sequence of vector fields on $M$.

\textbf{Step 1.} A sequence of vector fields: $V_0,V_1,\ldots,V_{k-1}$, such that all $V_i$ are $F_i$-Killing fields, $i\in\{0,1,\ldots,k-1\}$,
\begin{figure}[H]
\begin{center}
\setlength{\unitlength}{1cm}
\begin{picture}(20,2)
{\small
\put(1,1.1){\vector(1,0){2.2}}
\put(6.1,1.1){\vector(1,0){2.5}}
\put(11.6,1.1){\vector(1,0){2.5}}

\put(0,1){$(M,h)$}
\put(3.3,1){$(M,F_1=\alpha_1+\beta_2)$}
\put(8.7,1){$(M,F_2=\alpha_2+\beta_2)$}

\put(1.1,1.4){$V_0,\|V_0\|_h<1$}
\put(1.1,0.6){$h$-Killing}
\put(6.2,1.4){$V_1,F_1(-V_1)<1$}
\put(6.2,0.6){$F_1$-Killing}
\put(11.7,1.4){$V_2,F_2(-V_2)<1$}
\put(11.7,0.6){$F_2$-Killing}

\put(14.2,1.1){$\ldots$}

\put(0,-0.5){$\ldots$}
\put(0.5,-0.5){\vector(1,0){3.8}}
\put(0.6,-0.2){$V_{k-1},F_{k-1}(-V_{k-1})<1$}
\put(0.6,-0.9){$F_{k-1}$-Killing}
\put(4.3,-0.6){$(M,F_k=\alpha_k+\beta_k)$;}

}
\end{picture}
\end{center}
\end{figure}
\bigskip
\textbf{Step 2.} A sequence of vector fields: $W_k.\ldots,W_l$, such that each $\beta_j$ is closed one-form, for $j\in\{k,k+1,\ldots,l\}$.

\begin{figure}[H]
\begin{center}
\setlength{\unitlength}{1cm}
\begin{picture}(20,2)
{\small
\put(2.5,1.1){\vector(1,0){2.6}}
\put(8.6,1.1){\vector(1,0){3.5}}

\put(0,1){$(M,F_k+\alpha_k+\beta_k)$}
\put(5.1,1){$(M,F_{k+1}=\alpha_{k+1}+\beta_{k+1})$}
\put(5.6,0.6){$d\beta_{k+1}=0$}
\put(12.1,1){$(M,F_{k+2}=\alpha_{k+2}+\beta_{k+2})$}
\put(12.6,0.6){$d\beta_{k+2}=0$}

\put(2.6,1.3){$W_k,F_k(-W_k)<1$}
\put(8.7,1.3){$W_{k+1},F_{k+1}(-W_{k+1})<1$}

\put(0,-0.5){$\ldots$}
\put(0.5,-0.5){\vector(1,0){3.5}}
\put(0.6,-0.3){$W_{k+2},F_{k+2}(-W_{k+2})<1$}
\put(4.1,-0.5){$\ldots$}
\put(4.6,-0.5){\vector(1,0){3.7}}
\put(4.7,-0.3){$W_{n-1},F_{n-1}(-W_{n-1})<1$}
\put(8.3,-0.6){$(M,F_n=\alpha_n+\beta_n)$.}
\put(8.8,-1.1){$d\beta_{n}=0$}

}
\end{picture}
\end{center}
\end{figure}

\end{Remark}

\bigskip

Theorem \ref{thm_main_1} can be naturally extended to the two-step construction above. Indeed, if we start with a Riemannian structure $(M,h)$ and a sequence of vector fields $V_0,V_1,\ldots,V_{k-1}\in\mathcal{X}(M)$, the Zermelo's navigation problems for
\begin{equation*}
\begin{split}
(h,V_0) &\ \text{with solution}\ F_1=\alpha_1+\beta_1,\\
(F_1,V_1) &\ \text{with solution}\ F_2=\alpha_2+\beta_2,\\
&\quad\quad\vdots\\
(F_{k-1},V_{k-1}) &\ \text{with solution}\ F_k=\alpha_k+\beta_k,
\end{split}
\end{equation*}
will generate a sequence of positive defined Randers metrics provided $\|V_0\|_h<1$, $F_i(-V_i)<1$, $i\in\{1,\ldots,k-1\}$. The Zermelo's navigation data for $F_i$ is also $(h,V_0+\ldots+V_i)$, for all $i\in\{1,\ldots,k-1\}$, hence $F_k$ is positive defined if and only if $\|V_0+\ldots+V_{k-1}\|_h<1$.

Next, if we start with $(M,F_k)$ and the sequence of vector fields $W_k,\ldots,W_{n-1}\in \mathcal{X}(M)$ the Zermelo's navigation problems for
\begin{equation*}
\begin{split}
(F_k=\alpha_k+\beta_k,W_k) &\ \text{with solution}\ F_{k+1}=\alpha_{k+1}+\beta_{k+1},\\
(F_{k+1}=\alpha_{k+1}+\beta_{k+1},W_{k+1}) &\ \text{with solution}\ F_{k+2}=\alpha_{k+2}+\beta_{k+2},\\
&\quad\quad\vdots\\
(F_{n-1},W_{n-1}) &\ \text{with solution}\ F_n=\alpha_n+\beta_n,
\end{split}
\end{equation*}
will generate another sequence of positive defined Randers metrics provided $F_{k+j}(-W_{k+j})<1$, $j\in\{0,1,2,\ldots,n-k-1\}$.

Observe again that by combining these with the sequence of Randers metrics constructed at first step, we can easily see that the Zermelo's navigation data of $F_{k+j}$, $j\in\{0,1,\ldots,n-k\}$ is $(h,V_0+\ldots+V_{k-1}+W_k+\ldots+W_{k+j})$, hence the final Randers metric $F_n=\alpha_n+\beta_n$ is positive defined if and only if 
$$
\left\|\sum_{i=0}^{k-1}V_i+\sum_{j=0}^{n-k-1}W_{j+k}\right\|_h<1.
$$

Moreover, if we impose conditions
\begin{itemize}
\item[($C_0$)] $V_0$ is $h$-Killing;
\item[($C_{1i}$)] $V_i$ is $F_i$-Killing, $i\in\{1,\ldots,k-1\}$;
\item[($C_{2j}$)] $W_{k+j}$ is chosen such that $d\beta_{k+j}=0$, $j\in\{0,\ldots,n-k\}$. 
\end{itemize}

Clearly the geodesics, conjugate and cut loci of $F_n$ can be obtained from the geodesics, conjugate locus, cut locus of $h$ through the flow of $V:=\sum_{i=0}^{k-1}V_i$, respectively.

Observe that condition $(C_{2j})$ are similar to (C2) in Theorem \ref{thm_main_1}, but we prefer not to write them here explicitly, for simplicity.

This is the generalization of Theorem \ref{thm_main_1} to the sequence of Finsler metrics $\{F_1,\ldots,F_n\}$.

Nevertheless, there is a shortcut in this construction in the spirit of Remark \ref{rem_condition}. Indeed, if $V+W$ is $h$-Killing, where $V=\sum_{i=0}^{k-1}V_i$, $W=\sum_{j=0}^{n-k-1}W_{k+j}$, then the geodesics, conjugate and cut loci of $F_n$ are obtained from the geodesics, conjugate  and cut loci of $h$ through the flow of $V+W$, respectively.

\section{Conclusions}\label{sec: four Examples}
We will consider a simple example of the construction described in Theorem \ref{thm_main_1}.

\begin{figure}[H]
\begin{center}
\setlength{\unitlength}{1cm}
\begin{picture}(20,3)

\put(1.2,1.7){\vector(1,0){1.8}}
\put(6.3,1.7){\vector(1,0){1.8}}
\put(11.4,1.7){\vector(1,0){1.8}}

\put(0,1.6){$(M,h)$}
\put(3,1.6){$(M,F_0=\alpha_0+\beta_0)$}
\put(8.1,1.6){$(M,F_1=\alpha_1+\beta_1)$}
\put(13.2,1.6){$(M,F_2=\alpha_2+\beta_2)$}

\put(3,0.5){$(h,V_0)$}
\put(8.1,0.5){$(h,V_0+V)$}
\put(13.2,0.5){$(h,V_0+V+W)$}

\put(1.2,2){$V_0=\mu_o\frac{\partial}{\partial\theta}$}
\put(1.2,1){$h$-Killing}
\put(6.3,2){$V=\mu_1\frac{\partial}{\partial\theta}$}
\put(6.3,1){$h$-Killing}
\put(11.7,2){$W$}
\put(13.7,1){$d\beta_2=0$.}
\end{picture}
\end{center}
\end{figure}

Let us start with the Riemannian two-sphere of revolution $(M\simeq \Sph^2,h=dr^2+m^2(r)d\theta^2)$ given in Section \ref{sec_two_sphere}, Proposition \ref{prop_1}. The vector field $V_0\in\mathcal{X}(M)$ is $h$-Killing if and only if it is a rotation, i.e. $V_0=\mu_0\frac{\partial}{\partial\theta}$, $\mu_0$ constant, where $(r,\theta)$ are the $h$-geodesic coordinates. In order that $F_0$ is positive defined we need the condition $\|V_0\|_h<1$, i.e. $\mu_0^2m^2(r)<1$.

Next, we consider the vector field $V\in\mathcal{X}(M)$ which is also $h$-Killing if and only if $V=\mu_1\frac{\partial}{\partial\theta}$ with $\mu_1$ constant (see Remark \ref{rem_condition}). The Randers metric $F_1=\alpha_1+\beta_1$ is positive defined if and only if $(\mu_0+\mu_1)^2m^2(r)<1$, i.e. we choose $\mu_0,\mu_1$ such that $m(r)<\frac{1}{\mu_0+\mu_1}$.

Finally, we construct a vector field $W\in\mathcal{X}(M)$ such that $d\beta=0$. For instance
$$
W=A(r)\frac{\partial}{\partial r}-\mu\frac{\partial}{\partial\theta},
$$
where $\mu:=\mu_0+\mu_1$ is an obvious choice. Observe that $V_0+V+W=A(r)\frac{\partial}{\partial r}$, hence $\beta_2=-\frac{A(r)}{1-A^2(r)}dr$. If we impose condition $A^2(r)<1$, then $F_2$ is a positive defined Randers metric.

We obtain
\begin{Proposition}\label{prop: examples}
Let $(M\simeq \Sph^2,h=dr^2+m^2(r)d\theta^2)$ be the Riemannian two-sphere of revolution described in Proposition \ref{prop_1}. Let 
$$
V_0=\mu_0\frac{\partial}{\partial\theta},\ V=\mu_1\frac{\partial}{\partial\theta},\ W=A(r)\frac{\partial}{\partial r}-\mu\frac{\partial}{\partial\theta},
$$
be three vector fields on $M$, where $\mu=\mu_0+\mu_1$.
\begin{enumerate}[(i)]
\item If $m(r)<\frac{1}{\mu}$ for all $r\in [0,\pi]$ and $A:[0,\pi]\to[0,\infty)$ is smooth function such that $A^2(r)<1$, then the Finsler metrics $F_0=\alpha_0+\beta_0$, $F_1=\alpha_1+\beta_1$, $F_2=\alpha_2+\beta_2$, obtained as solutions of Zermelo's navigation problem with data $(h,V_0)$, $(F_0,V)$ and $(F_1,W)$, respectively, are positive defined Randers metrics.
\item The Randers metrics $F_1=\alpha_1+\beta_1$ and $F_2=\alpha_2+\beta_2$ can be obtained as solutions of Zermelo's navigation problem with data $\left(h,\mu\frac{\partial}{\partial \theta}\right)$ and $(h,A(r)\frac{\partial}{\partial r})$, respectively.
\item 
\begin{itemize}
\item[iii.1] The unit speed $F_2$-geodesics are given by
$$
\mathcal{P}(t)=\psi_t(\rho(t)),
$$ 
where $\rho$ are unit speed $h$-geodesics and $\psi_t$ is the flow of $\widetilde{V}=V_0+V+W=A(r)\frac{\partial}{\partial r}$.
\item[iii.2] The point $\widehat{p}=\mathcal{P}(l)$ is conjugate to $\widehat{q}:=\mathcal{P}(0)$ along the $F_2$-geodesic $\cP:[0,l]\to M$ if and only if $q=\mathcal{P}(0)=\rho(0)$ is conjugate to $p:=\rho(l)$ along the corresponding $h$-geodesic $\rho(t)=\psi_{-t}(\cP(t))$, $t\in[0,l]$.
\item[iii.3] The cut locus of a point $q\in(M,F_2)$ is a subarc if the antipodal parallel displaced by the flow $\varphi_t$.
\end{itemize}
\end{enumerate}
\end{Proposition}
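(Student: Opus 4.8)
The plan is to read Proposition \ref{prop: examples} as a concrete instance of Theorem \ref{thm_main_1}: once the three fields $V_0,V,W$ are shown to obey the positivity bounds and the structural conditions (C0), (C1), (C2), all of (i)--(iii) follow by specialization. Everything is computed in the orthogonal frame of $h=dr^2+m^2(r)\,d\theta^2$, where $\|\partial_r\|_h=1$ and $\|\partial_\theta\|_h=m(r)$, so the required norms and dual forms are essentially immediate.

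First I would settle (i) and (ii) together. By Lemma \ref{lem_new_1}(i) the navigation data of $F_1$ is $(h,V_0+V)$ and that of $F_2$ is $(h,V_0+V+W)$; since $\mu=\mu_0+\mu_1$, the angular parts add to $V_0+V=\mu\,\partial_\theta$ and cancel in $V_0+V+W=A(r)\,\partial_r$, which is exactly the content of (ii). Positive definiteness in (i) is then, by Lemma \ref{lem_new_1}(ii), equivalent to the corresponding $h$-norm being $<1$: one has $\|V_0\|_h^2=\mu_0^2m^2$, $\|V_0+V\|_h^2=\mu^2m^2$, and $\|V_0+V+W\|_h^2=A^2(r)$. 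Hence $m(r)<1/\mu$ makes $F_1$ (and \emph{a fortiori} $F_0$) positive definite, and $A^2(r)<1$ makes $F_2$ positive definite.

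For (iii) I would verify the hypotheses of Theorem \ref{thm_main_1}(iii). Since $\partial_\theta$ is the rotational Killing field of a surface of revolution, $V_0=\mu_0\,\partial_\theta$ is $h$-Killing, giving (C0); and by Remark \ref{rem_condition}, under (C0) condition (C1) reduces to $V=\mu_1\,\partial_\theta$ being $h$-Killing, which holds. The decisive step is (C2). Because the total field is purely radial, $V_0+V+W=A(r)\,\partial_r$, its $h$-Legendre transform is $(V_0+V+W)^\#=A(r)\,dr$, a one-form that is a function of $r$ times $dr$, while $\widetilde\lambda=1-A^2(r)$ depends only on $r$. Consequently both $d(V_0+V+W)^\#$ and $d\log\widetilde\lambda\wedge(V_0+V+W)^\#$ carry a factor $dr\wedge dr$ and vanish, so (C2) holds; equivalently, by Lemma \ref{lem: X2} this is precisely $d\beta_2=0$ with the manifestly closed $\beta_2=-\frac{A(r)}{1-A^2(r)}\,dr$. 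With (C0), (C1), (C2) secured, Theorem \ref{thm_main_1}(iii.1)--(iii.2) yields the geodesic description and the conjugate- and cut-point correspondences of (iii.1)--(iii.2). For (iii.3) I would invoke that $(M,h)$ is the sphere of Proposition \ref{prop_1}, whose cut locus is a subarc of the antipodal parallel $r=\pi-r_0$; since the transport supplied by Theorem \ref{thm_main_1}(iii.3) carries parallels to parallels, the $F_2$-cut locus of $q$ is again a subarc of a parallel.

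The norm computations and the explicit form of $\beta_2$ are routine; the two points that need care are the verification of (C2) --- where the key observation is that the net navigation field $A(r)\,\partial_r$ is radial, forcing its dual one-form to be closed --- and the bookkeeping of which flow transports the cut locus. In particular one must keep separate the rotational Killing flow of $\mu\,\partial_\theta$ governing the passage $h\to F_1$ from the projective (closed-form) step $F_1\to F_2$, so that the flow $\psi_t$ appearing in (iii.1) and the displacement of the antipodal parallel in (iii.3) are correctly matched; this reconciliation of the radial field $\widetilde V=A(r)\,\partial_r$ with the angular Killing transport is the only genuinely delicate part of the argument.
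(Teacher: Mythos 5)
Your proposal follows the same route as the paper's own (informal) proof: the paper establishes Proposition \ref{prop: examples} exactly by the construction you describe --- additivity of navigation data (Lemma \ref{lem_new_1}) to get (ii) and the positivity bounds $\|V_0+V\|_h^2=\mu^2m^2<1$, $\|V_0+V+W\|_h^2=A^2<1$ for (i), then verification of (C0), (C1) (via Remark \ref{rem_condition}) and of (C2) through the closedness of $\beta_2=-\frac{A(r)}{1-A^2(r)}\,dr$, followed by an appeal to Theorem \ref{thm_main_1}. Your computations (the $h$-norms, the Legendre dual $(A(r)\partial_r)^{\#}=A(r)\,dr$, the wedge argument for (C2)) all agree with the paper's.

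There is, however, a genuine gap in the last step, which you sense (your ``reconciliation'' remark) but do not close --- and which the paper leaves open as well. You assert that Theorem \ref{thm_main_1}(iii.1)--(iii.2) ``yields'' the Proposition's (iii.1)--(iii.2). It does not, as stated: under (C0)--(C2) the theorem describes the $F_2$-geodesics as coinciding, as point sets, with the $F_1$-geodesics, i.e.\ with $h$-geodesics transported by the \emph{rotational} flow of $V_0+V=\mu\,\partial_\theta$, whereas the Proposition claims transport by the flow $\psi_t$ of the \emph{radial} field $\widetilde V=A(r)\,\partial_r$. These are different assertions: the rotational transport of the equator is the equator itself, while its radial transport is a spiral leaving it whenever $A(\pi/2)\neq 0$. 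No result in the paper licenses the radial description; that would require $A(r)\,\partial_r$ to be $h$-Killing (the shortcut of Remark \ref{rem_condition}), but
$$
\mathcal{L}_{\widetilde V}\,h=2A'(r)\,dr^2+2m(r)m'(r)A(r)\,d\theta^2,
$$
which vanishes only for $A\equiv 0$. A complete proof must therefore either replace (iii.1)--(iii.2) by the rotational-transport statement that Theorem \ref{thm_main_1} actually provides --- which is what the paper's construction supports, and which still gives (iii.3), since rotations preserve each parallel and hence carry the $h$-cut locus, a subarc of the antipodal parallel, onto a subarc of the same parallel --- or supply a separate argument for the radial flow, which neither you nor the paper does. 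In short: your verification of the hypotheses is correct and identical to the paper's, but the deduction of (iii.1)--(iii.2) as literally stated is a gap inherited from the Proposition itself, not repaired by citing the theorem.
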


One can describe the Finsler metric $F_2$ in coordinates as follows. If $h$ is given by $ds^2=dr^2+m^2(r)d\theta^2$ in the geodesic coordinates $(r,\theta)\in(0,\pi]\times[0,2\pi)$, then
\begin{equation*}
\begin{split}
\alpha^2_2&=\frac{1}{\widetilde{\lambda}^2(r)}dr^2+\frac{m(r)}{\widetilde{\lambda}}ds^2,\\
\beta_2&=-\frac{A(r)}{\widetilde{\lambda}(r)}dr,\ \widetilde{\lambda}(r):=1-A^2(r).
\end{split}
\end{equation*}

More precisely, if $m(r)=\frac{1}{1-2\alpha}\sin(r-\alpha\sin 2r)$ (see Example \ref{ex_1}) , for any $\alpha\in\left(0,\frac{1}{2}\right)$ and $A(r):=\frac{r}{\sqrt{r^2+1}}$, then $\widetilde{\lambda}=\frac{1}{r^2+1}$ hence the Finsler metric $F_2$ is given by
\begin{equation*}
\begin{split}
\alpha_2^2&=(r^2+1)^2dr^2+\frac{1}{1-2\alpha}(r-\alpha\sin 2r)(r^2+1)d\theta^2,\\
\beta_2&=-r\sqrt{r^2+1}\ dr,\ r\in(0,\pi],\ \theta\in[0,2\pi).
\end{split}
\end{equation*}
Other examples can be similarly constructed from the Riemannian examples in \cite{TASY}.

\begin{Remark}
Observe that in order to construct Randers metric having same cut locus structure as the Riemannian metric $h$, another condition is also possible. Indeed, choosing
\begin{equation*}
\begin{split}
V_0:&=v_0(r,\theta)\frac{\partial}{\partial r}+w_0(r,\theta)\frac{\partial}{\partial \theta},\\
V:&=-v_0(r,\theta)\frac{\partial}{\partial r}+[\mu-w_0(r,\theta)]\frac{\partial}{\partial\theta},
\end{split}
\end{equation*}
will lead to $V_0+V=\mu\frac{\partial}{\partial \theta}$ which is $h$-Killing and combined with $W=A(r)\frac{\partial}{\partial r}-\mu\frac{\partial}{\partial\theta}$ the derived conclusion follows, for any smooth functions $v_0,w_0$ and constant $\mu$ such that $m(r)<\frac{1}{\mu}$.
\end{Remark}

\end{document}